\newcommand{\floor}[1]{\lfloor {#1} \rfloor}
\numberwithin{equation}{section}
\numberwithin{theorem}{section}
\numberwithin{proposition}{section}
\numberwithin{lemma}{section}
\numberwithin{corollary}{section}
\newcommand{\sn}{\sqrt{n}}
\newcommand{\tT}{\tilde T}
\newcommand{\ignore}[1]{}
\newcommand{\To}{\Rightarrow}
\begin{document}
\title{Conditioned Functional Limits and Applications to Queues} 

\authorone[School of Industrial Engineering, Purdue University]{Harsha Honnappa} 
\addressone{315 N. Grant St., West Lafayette IN 47906. honnappa@purdue.edu} 

\authortwo[Department of Electrical Engineering, Computer Science and
Industrial and Systems Engineering, University of Southern
California]{Rahul Jain} 
\addresstwo{3740 McClintock Ave., Los Angeles CA 90089. rahul.jain@usc.edu} 

\authorthree[Marhsall School of Business, University of Southern
California]{Amy R. Ward} 

\addressone{3670 Trousdale Parkway, Los Angeles CA 90089. amyward@marhsall.usc.edu} 

\authornames{Honnappa, Jain, Ward} 
\shorttitle{Conditioned Limits} 

\keywords{renewal process; conditional probability; exchangeable
  increments; functional limits; transitory queues; workload} 
\ams{60K25,60F10}{90B22,68M20}

\begin{abstract}
We consider a renewal process that is conditioned on the number of
events in a fixed time horizon.  We prove that a centered and scaled
version of this process converges to a Brownian bridge, as the number
of events grows large, which relies on first establishing a functional
strong law of large numbers result to determine the centering.  These
results are consistent with the asymptotic behavior of a conditioned
Poisson process.  We prove the limit theorems over triangular arrays
of exchangeable random variables, obtained by conditionning a sequence
of independent and identically distributed renewal processes. We construct martingale difference sequences with respect to these triangular arrays, and use martingale convergence results in our proofs.  To illustrate how these results apply to performance analysis in queueing, we prove that the workload process of a single server queue with conditioned renewal arrival process can be approximated by a reflected diffusion having the sum of a Brownian Bridge and Brownian motion as input to its regulator mapping.
\end{abstract}

\section{Introduction}
The objective of this paper is to prove limit theorems for renewal
processes conditioned on hitting a fixed integer level $n$ in a fixed time horizon; we denote
the corresponding event as $\mathcal A_n$. To
be precise, we establish functional strong law (FSLLN) and functional central
limit theorems (FCLT) as $n$ tends to infinity. A well
known result in stochastic process theory is the ordered
statistics \textsc{(OS)} property of Poisson processes; viz., the
arrival epochs of a homogeneous Poisson process conditioned on
$\sA_n$ are equal in distribution to the ordered statistics of $n$
independent and identically distributed (i.i.d.) uniform random
variables. It follows, for each $n$, that an appropriately defined `conditioned'
Poisson process is equal in distribution to the empirical distribution
process constructed from the ordered statistics. A further corollary to this property is that, appropriately scaled, the conditioned Poisson processes satisfy a
version of Donsker's FCLT for empirical processes so that a sequence of diffusion-scaled conditioned
Poisson processes converges to a Brownian bridge process, as $n\to\infty$. We provide a
proof of this result in Theorem~\ref{thm:conditioned-poisson-fclt}.

As \cite{Fe79,Li1985} show, the only renewal
process that satisfies the~\textsc{OS} property is
Poisson, due to the independent increments property. Nonetheless, a natural conjecture is that a conditioned
renewal process (appropriately scaled) satisfies a counterpart to the Poisson
FCLT in Theorem~\ref{thm:conditioned-poisson-fclt}. The primary
result of this paper, Theorem~\ref{thm:counting},
establishes precisely this result in a triangular array setting. However, the proof is more
subtle and we
construct a sequence of probability sample spaces by conditioning on
the sequence of events
$\{\sA_n~n\geq 1\}$; note that there are no measurability issues owing to the fact
that by definition $\sA_n$ has positive measure. Now, renewal processes display weak dependence, specifically
exchangeability of the inter-arrival times, when conditioned on
the event $\sA_n$. We use the sequence of probability sample spaces to
construct a triangular array of exchangeable random variables
representing the inter-arrival times of a sequence of conditioned
renewal processes.

Consequently, Theorem~\ref{thm:counting} shows that the diffusion-scaled conditioned
renewal process converges weakly to a Brownian bridge process, akin to
Theorem~\ref{thm:conditioned-poisson-fclt}. 
The proof follows from Proposition~\ref{thm:conditioned-fslln}
and Proposition~\ref{thm:conditioned-fclt}, which prove FSLLN and
FCLT's (respectively) for the partial sums constructed from the triangular array. The proof of
Proposition~\ref{thm:conditioned-fslln} follows by the construction of a
triangular array martingale sequence, and then using the
martingale convergence theorem. Similarly, for
Proposition~\ref{thm:conditioned-fclt}, we construct a triangular 
martingale difference array from standardized inter-arrival times, and
show that this martingale difference array satisfies a martingale FCLT.

As an application of the limit results,  in
Section~\ref{sec:workload} we briefly consider the
performance analysis of a
queueing system that sees a fixed finite number of jobs applying for
service over a fixed time horizon. Examples of such systems include clinics, certain call
centers, airline check-in queues, and even certain cloud-based
computing systems where client systems contact a centralized server
for updates. All of these receive a fixed finite number of jobs
over a finite time period. Further,
no jobs are carried over from one `on period' to the next. One approach to modeling
such systems is to use a single server queue with a conditioned
renewal arrival process. This leads to a reflected diffusion
approximation that depends on a Brownian bridge process, which is in
contrast with the conventional heavy-traffic diffusion
approximation that is a reflected Brownian motion. We contrast
these two approximations in Section~\ref{sec:compare}. An important
distinction between the approximations is the fact that we do not
assume an explicit heavy-traffic condition in the conditioned renewal
case.


\paragraph{Existing Literature} \label{sec:lit}
There is a substantial literature related to weak convergence of
conditioned random walks and partial sum processes. In particular, we
note \cite{Li1968,Li1970} where conditioned limit theorems are proved in
some generality for multivariate processes with
i.i.d. increments. \cite{Du1980,Ig1974a,Ig1974b,Ke1992} (and many
others) study the question of the limit behavior of random walks
conditioned to stay positive. There is a much less extensive
literature on conditioned limit theorems for sums of weakly dependent
sequences, which would be relevant to this paper; see \cite{Du1978}
for instance. Conditioned limit theorems have also been used in the
context of performance analysis of queues. For instance \cite{Ho1983}
develops functional limits for the workload process conditioned on the
event that the number of customers in a busy period exceeds or equals
some pre-specified level, as this level tends to infinity. It is shown that the workload process converges to the
Brownian excursion process. The limit results in \cite{As1982} come
closest to the current paper. There, limit theorems for random walks
conditioned on exceeding a certain level in finite time are derived
under the assumption that the random walk has negative drift. It is
shown that the `polygonized' random walk sample path converges to
a Brownian bridge process. This is then used to study the $GI/G/1$
waiting time process in a busy period. 

The diffusion approximation derived for the workload process in
Section~\ref{sec:workload} is similar to that of the $\D_{(i)}/G/1$
queue, derived in \cite{HoJaWa2012}. In the latter model, the arrival
epochs of a large but finite number of arrivals are modeled as
i.i.d. random variables, and the arrival process is defined as the
empirical distribution defined with respect to these random
samples. The workload diffusion approximation is shown to be a
function of a Brownian bridge process, but the resulting limit is more
general than that derived in Section~\ref{sec:workload}. In
particular, it is shown that the reflection is through the so-called
`directional derivative' reflection map \cite[Chapter 9]{Wh01b}. The
primary reason for the difference in the approximations is the fact
that in~\cite{HoJaWa2012} the fluid limit is non-linear (and
time-varying) where as in the current paper, the fluid limit is trivial. More
recently, \cite{BeHoJvL2015} studied the $\D_{(i)}/G/1$ queue under a
heavy-traffic condition on the initial work and exponentially
distributed arrival epochs, and showed that the diffusion
approximation to the queue length is a reflected Brownian motion
process with parabolic drift. The diffusion approximation in
Theorem~\ref{thm:conditioned-poisson-fclt} can also be contrasted with the
diffusion approximation for the $M_t/M/1$ queue implied by the results
in \cite{MaMa95}. 
In particular, a martingale strong approximation argument from \cite[Chapter 7]{EtKu86} 
is used to show that the compensated Poisson process (which is a
martingale) converges to a Brownian motion process. In
 contrast, the conditioned Poisson process is not a compensated
 martingale process, since it is defined with respect to the
 conditioned measure on the set $\mathcal A_n$, and requires a
 different treatment.

\section{Notation}
Let $(\Omega,\sF, \bbP)$ be the sample space with respect to which we
define the random elements of interest. The topology of convergence is
$(\sD,U)$ where $\sD$ is the space of functions that are right
continuous with left limits (\textit{cadlag}) and $U$ is the uniform
metric topology on compact sets of $[0,\infty)$. Weak convergence is represented as
$\Rightarrow$; if necessary, we also note the probability measure as
well: for instance, $\Rightarrow_{\mathbb P}$. Stochastic dominance and equivalence in distribution
are represented as $\leq_{st}$ and $\stackrel{d}{=}$ respectively. 

\section{The Conditioned Poisson Model}~\label{sec:poisson}



\label{sec:poisson}
Let $(P(t), t\geq 0)$ be a unit rate Poisson process defined with respect to $(\Omega, \sF, \bbP)$. Let
$\g : [0,\infty) \to [0,\infty)$ be a non-negative function such that $\G(t) := \int_0^t \g(s) ds$ 
is finite for each $t \in [0,\infty)$. It follows that $P\circ\G$ is a
nonhomogeneous Poisson process with time-varying rate function
$\g$. Fix $ T \in (0,\infty)$, then
\begin{equation} \label{eq:dist-func-from-poisson}
F(t) := \frac{\G(t)}{\G(T)} ~~\forall t \in [0,T],~~
\end{equation}
is a continuous probability distribution function.

\begin{definition}[\textsc{OS} Property] \label{def:OS}
~Let $n \geq 1$. Conditioned on the event $\{P(T) = n\}$, the arrival epochs $(T_1,\ldots, T_n)$, are distributed as the ordered statistics of $n$
independent and identically distributed (i.i.d.) random variables sampled from the
distribution $F(t)$ for $t \in [0,T]$.
\end{definition}

Let  $0 \leq t_1 < \cdots < t_{d} \leq T$ represent an arbitrary
partition of $[0,T]$. Then, the independent increments property of
the Poisson process implies that the following sequence of measures is well-defined:
\begin{align}
  \begin{split}
    \m_n(z_1,z_2,\ldots z_d) := \bbP\bigg(P(t_1) = l_1, ~&P(t_2) -
    P(t_1) = l_2,\\& \ldots, P(t_d)-P(t_{d-1}) = l_{d} \bigg| ~P(T) = n
    \bigg),
  \end{split}
\label{eq:2}  
\end{align}
where $l_i = z_i - z_{i-1}$ (with $z_0 = 0$) and $\sum_{i=1}^d l_i \leq n$. From Kolmogorov's Extension Theorem \cite[Section
A.7]{Du10} it follows that there
exists a stochastic process $\hat P_n$ such that for any $(x_1,\cdots, x_d) \in \bbR^d$,
\begin{align*}
  &\begin{split}
    \bbP \bigg(\hat P_n(t_1) \leq x_1, \ldots, \hat P_n(t_d) \leq x_d
    \bigg) =\bbP \bigg( &\frac{1}{\sn}(P(t_1) - n F(t_1)) \leq
    x_1,\\& \ldots,\frac{1}{\sn}(P(t_d) - n F(t_d) ) \leq x_d \bigg |
    ~P(T) = n\bigg)
      \end{split}\\
    &= \int_{B_n(x_1,\cdots, x_d )} \mu_n \left( dz_1, \ldots, dz_d \right),
\end{align*}
where $B_n(x_1,\cdots, x_d) := \{(z_1,\ldots, z_d) \in \bbZ^d : z_1 \leq n F(t_1) + x_1 \sn, \ldots,
z_d \leq n F(t_d) + x_d \sn\} $. By exploiting the \textsc{OS} property we
can easily obtain a FCLT satisfied by the process $\hat P_n$. Let
$W^0$ be a Brownian bridge process defined with respect to $(\Omega,\sF,\bbP)$.


\begin{theorem} \label{thm:conditioned-poisson-fclt}
~We have,
\(
\hat{P}_n \Rightarrow W^0 \circ F \,\, \text{ in } (\sD, U),
\)
as $n \to \infty$.
\end{theorem}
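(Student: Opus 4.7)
The plan is to exploit the ordered statistics property (Definition~\ref{def:OS}) to rewrite the centered and scaled conditioned Poisson process as a classical empirical process indexed by a fixed continuous time change, and then invoke Donsker's theorem. Concretely, conditional on $\{P(T)=n\}$, the OS property gives the distributional identity
\[
(P(t))_{t \in [0,T]} \stackrel{d}{=} \bigg( \sum_{i=1}^n \mathbf{1}\{\xi_i \leq t\} \bigg)_{t \in [0,T]},
\]
where $\xi_1,\ldots,\xi_n$ are i.i.d.\ with distribution function $F$. Writing $F_n$ for their empirical distribution, this reads $P \stackrel{d}{=} nF_n$ on $[0,T]$ under the conditional measure, and so by the definition of $\hat P_n$ one has $\hat P_n \stackrel{d}{=} \sn(F_n - F)$ as random elements of $\sD$.

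Next I would reduce to the uniform empirical process. Because $F$ is continuous by~\eqref{eq:dist-func-from-poisson}, the variables $U_i := F(\xi_i)$ are i.i.d.\ uniform on $[0,1]$, and the identity $F_n = G_n \circ F$ holds for the empirical distribution $G_n$ of $U_1,\ldots,U_n$. Hence $\sn(F_n - F) = \sn(G_n - \mathrm{id}) \circ F$, and Donsker's FCLT for the uniform empirical process yields $\sn(G_n - \mathrm{id}) \To W^0$ in $(\sD([0,1]), U)$, where $W^0$ is a standard Brownian bridge.

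Composition with the fixed continuous nondecreasing map $F:[0,T] \to [0,1]$ is a continuous operation from $(\sD([0,1]), U)$ into $(\sD([0,T]), U)$, so the continuous mapping theorem delivers $\hat P_n \To W^0 \circ F$ in $(\sD([0,T]), U)$. Convergence in $(\sD, U)$ on $[0,\infty)$ then follows by extending $F(t) = 1$ and $P(t) = P(T)$ for $t \geq T$, so that both $\hat P_n$ and the limit are constant beyond $T$ and convergence on compacts is governed entirely by the behavior on $[0,T]$.

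The main obstacle, modest as it is, lies in the first step: one must identify $\hat P_n$, whose joint laws are specified only through the conditional measures $\mu_n$ in~\eqref{eq:2}, with an actual \emph{process} equal in law to $\sn(F_n - F)$, not merely a collection of consistent finite-dimensional distributions. This follows because both $\hat P_n$ and $\sn(F_n - F)$ are cadlag step functions whose finite-dimensional laws coincide by the OS property, so the Kolmogorov extension argument already invoked in the text for $\hat P_n$ produces precisely the law of $\sn(F_n - F)$. With that identification in hand, Donsker's theorem and continuous mapping complete the proof without further work.
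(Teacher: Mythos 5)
Your proposal is correct and follows essentially the same route as the paper: both use the \textsc{OS} property to identify the conditioned Poisson process in law with the empirical process of $n$ i.i.d.\ samples from $F$, and then invoke Donsker's FCLT for empirical processes. The only cosmetic difference is that you reduce explicitly to the uniform empirical process and compose with $F$ via the continuous mapping theorem, whereas the paper argues convergence of finite-dimensional distributions plus tightness of $\hat A_n$ and cites \cite[Theorem 8.1]{Bi68} directly.
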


\begin{proof}
For a fixed $n \geq 1$, $x \in \bbR$ and $t \in [0,T]$ we have
\begin{align*} 
\bbP(\hat P_n (t) \leq x) = \bbP(P(t) \leq x \sn + n F(t) |
P(T) = n).
\end{align*}
Let $\mathbf T := \{T_1, \ldots, T_n\}$ be a collection of i.i.d. random
variables, with distribution function $F$ (defined in
\eqref{eq:dist-func-from-poisson}). Let $A_n(t) := \sum_{i=1}^n
\mathbf{1}_{\{T_i \leq t\}}$ and $\hat A_n(t) := \sn
\left( \frac{A_n(t)}{n} - F(t) \right)$ be the
empirical process associated with $\mathbf T$. The \textsc{OS}
property implies that
\(
\bbP(P(t) = l | P(T) = n) = \bbP(A_n(t) = l),
\)
 using the fact that $\{T_{l} \leq t < T_{l+1}\} = \{P(t)
 = l \}$. Therefore, we have
\begin{eqnarray*}
	\bbP(\hat P_n(t) \leq x) &=& \bbP(A_n(t) \leq x \sn + n
	F(t))\\
	&=& \bbP(\hat A_n(t) \leq x)\\ &\Rightarrow& (W^0 \circ
  F)(t)~\text{as}~n \to \infty
\end{eqnarray*}
the convergence following from Donsker's theorem for empirical
distributions \cite[Chapter 13]{Bi68}, proving the pointwise convergence of the process $\hat P_n$.

Next, consider the partition $0 < t_1 < \cdots < t_d < T$ and observe
that 
\[
\bbP \bigg(\hat P_n(t_1) \leq x_1, \ldots, \hat P_n(t_d) \leq x_d
    \bigg) = \bbP \left( \hat A_n(t_1) \leq x_1,\ldots,\hat A_n(t_d)
      \leq x_d \right).
    \]
From the proof of Donsker's theorem it follows that the increments of the
diffusion-scaled empirical process satisfies $(\hat A_n(t_1), \hat A_n(t_2) -
\hat A_n(t_1)\ldots, \hat A_n(t_d)-\hat A_n(t_{d-1})) \Rightarrow ((W^0
\circ F)(t_1), (W^0 \circ F)(t_2) - (W^0 \circ F)(t_1) \cdots, (W^0
\circ F)(t_d) - (W^0\circ F)(t_{d-1})$, implying
that the finite dimensional distribution of $\hat P_n$ too converges to the
same limit. Next, the tightness of the sequence $\hat A_n$ implies that
$\hat P_n$ is tight. Therefore, by \cite[Theorem 8.1]{Bi68}, $\hat
P_n \Rightarrow W^0 \circ F$ as $n \to \infty$.
$\QED$
\end{proof}



\section{Conditioned Renewal Model}\label{sec:conditioned-renewal}
The proof of Theorem~\ref{thm:conditioned-poisson-fclt} is a
consequence of the \textsc{OS} property. However, \cite[Theorem
1]{Li1985} shows that a renewal process satisfies the \textsc{OS} property if and only if it is Poisson (see
\cite{Fe79} as well). Consequently, it is not \textit{a priori} obvious
that the conditioned renewal process satisfies an analogous result to
Theorem~\ref{thm:conditioned-poisson-fclt}. Furthermore, while we could
argue the existence of the `conditioned' process, $\hat P_n$, by appealing to the
extension theorem and the independent increments property of the
Poisson process, we can no longer do that in the case of a renewal
process. Instead, we prove the conditioned functional limit
theorems in this section by working with the properties of the
inter-arrival times, when conditioned on the event $\sA_n$.

Let $F: [0, \infty) \to [0,1]$ now represent the distribution of a non-negative random variable, with  well
defined density function $f(t) := \frac{d F(t)}{dt}$. Without loss of generality, we also assume that $\int_0^{\infty}
(1-F(t)) dt < \infty$. Recall the
definition of a \textit{finitely exchangeable} sequence \cite[Chapter 9]{Ka2006}.

\begin{definition}[Finitely Exchangeable]\label{def:fe}
~Let $\{X_1, \ldots, X_n\}$ be a collection of random variables defined
with respect to the sample space $(\Omega, \sF, \bbP)$. Then, this
collection is said to be finitely exchangeable if
\(
\{X_1, \cdots, X_n\} \stackrel{d}{=} \{X_{\pi(1)}, \cdots, X_{\pi(n)}\},
\)
where $\pi : \{1, \ldots, n\} \to \{1, \ldots, n\}$ is a permutation
function on the index of the collection.
\end{definition}
Recall that an
\textit{infinitely} exchangeable sequence of random variables
satisfies the permutation condition in Definition~\ref{def:fe} for every
finite subset of random variables in the sequence. Thus, finitely
exchangeable random variables differ from infinitely
exchangeable sequences. Consequently,
important results such as de Finetti's Theorem, which could have been
used to represent the weakly dependent ensemble as a mixture of
independent random variables, are unavailable; see
\cite[Chapter 9]{Ka2006}. In the ensuing discussion, we will refer to
finitely exchangeable collections of random variables as merely
`exchangeable' for brevity. 

Renewal processes satisfy an \emph{exchangeable} (or \textsc{E})
property as summarized in the following lemma.

\begin{lemma}[\textsc{E} Property] \label{prop:exchangeinter}
~Let $\{\xi_{i}, ~i \geq 1\}$ be a sequence of
i.i.d. positive random variables and define $A(t) := \sup \{k > 0 |
\sum_{l=1}^k \xi_{l} \leq t\}, $ for all $t > 0$ to be the associated
renewal counting process. Fix $T \in (0,\infty)$. Then, the collection $\Xi_{n} :=
(\xi_{1}, \ldots, \xi_{n})$ is \textit{finitely exchangeable} when
conditioned by the event $\sA_n = \{A(T) = n\}$.
\end{lemma}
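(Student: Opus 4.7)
The plan is to compute the joint conditional density of $(\xi_1,\ldots,\xi_n)$ given $\sA_n$ explicitly and observe that it is invariant under permutations of its coordinates. The key observation is that the i.i.d.\ joint density $\prod_{i=1}^{n+1} f(x_i)$ of $(\xi_1,\ldots,\xi_{n+1})$ is symmetric in all of its arguments, so exchangeability after conditioning can only fail if the event $\sA_n$ depends asymmetrically on the first $n$ inter-arrival times; it does not.

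First, I would rewrite the conditioning event in terms of the partial sums $S_k := \xi_1+\cdots+\xi_k$, namely $\sA_n = \{S_n \leq T < S_n + \xi_{n+1}\}$. Using independence of $\xi_{n+1}$ from $(\xi_1,\ldots,\xi_n)$ and integrating out $\xi_{n+1}$ against its tail $1-F$, for any Borel $B\subset\bbR^n$ one obtains
\begin{align*}
\bbP\bigl((\xi_1,\ldots,\xi_n)\in B,\; \sA_n\bigr)
= \int_B \mathbf{1}_{\{\sum_{i=1}^n x_i \leq T\}}\,\Bigl(1 - F\Bigl(T - \sum_{i=1}^n x_i\Bigr)\Bigr) \prod_{i=1}^n f(x_i)\, dx_1\cdots dx_n.
\end{align*}
Since $F$ admits a density and $\int_0^\infty(1-F(t))\,dt < \infty$, a routine argument shows $\bbP(\sA_n) > 0$, so dividing by $\bbP(\sA_n)$ yields the conditional density of $(\xi_1,\ldots,\xi_n)$ given $\sA_n$.

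Second, I would observe that every factor in this density is a symmetric function of $(x_1,\ldots,x_n)$: both the indicator and the tail factor $1-F(T-\sum_i x_i)$ depend on the coordinates only through the symmetric quantity $\sum_{i=1}^n x_i$, while $\prod_{i=1}^n f(x_i)$ is manifestly a symmetric product. Hence the density is invariant under every permutation $\pi$ of $\{1,\ldots,n\}$, which gives $(\xi_{\pi(1)},\ldots,\xi_{\pi(n)}) \mid \sA_n \stackrel{d}{=} (\xi_1,\ldots,\xi_n) \mid \sA_n$, precisely the finite exchangeability property of Definition~\ref{def:fe}.

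There is no substantive obstacle; the argument is essentially a bookkeeping computation with conditional densities. The only subtle point is that the event $\sA_n$ involves $\xi_{n+1}$ in addition to $\xi_1,\ldots,\xi_n$, but this extra dependence reduces cleanly to a tail factor that depends on the first $n$ inter-arrivals only through the symmetric function $S_n$, thereby preserving the symmetry that drives the conclusion.
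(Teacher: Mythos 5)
Your proof is correct and follows essentially the same route as the paper: both arguments rest on the observation that, since the $\xi_i$ are i.i.d., the joint law of $(\xi_1,\ldots,\xi_n)$ restricted to $\sA_n$ is invariant under permutations of the first $n$ coordinates because $\sA_n$ depends on them only through the symmetric quantity $\sum_{i=1}^n \xi_i$. The paper expresses this as a symmetry of the joint event involving $\xi_{n+1}$, whereas you integrate $\xi_{n+1}$ out to exhibit the symmetric conditional density explicitly; the difference is purely presentational.
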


Before proceeding to the proof, note that the event $\mathbb P(\sA_n)
> 0$, under the conditions of the theorem.

\begin{proof}
Let $(x_1, \ldots, x_n) \in [0,\infty)^n$ and consider the measure of
the event $\{\xi_{1} \in dx_1, \cdots, \xi_{n} \in dx_n\} \in \sF$
conditioned on $\sA_n$,
\[
\begin{split}
\mathbb P(\xi_{1} \in dx_1, \cdots, \xi_{n} \in dx_n | \sA_n) = 
\frac{\mathbb P((\xi_{1} \in dx_1, \cdots, \xi_{n} \in dx_n), A(T) =n)}{\mathbb P(\sA_n)}.
\end{split}
\]
Recall that $\{A(T) = n\} = \left\{\sum_{l=1}^n \xi_{l} \leq T < \sum_{l=1}^{n+1} \xi_{l} \right\}$. Now, using the fact that under the measure $\bbP$, $\xi_{i}$ are i.i.d. random variables, it follows that the measure of the joint event is invariant under any permutation of the first $n$ random variables. That is, if $\pi(\cdot)$ is a permutation of $\{1,\ldots,n\}$, then we have
\[
\begin{split}
  \mathbb P \bigg( (\xi_{1} \in dx_1, \cdots, \xi_{n} &\in dx_n), A(T)
  = n \bigg)\\ =~
  &\mathbb P \bigg(\xi_{\pi(1)} \in dx_1, \ldots \xi_{\pi(n)} \in
  dx_n,\sum_{l=1}^n \xi_{\pi(l)} \leq T, \sum_{l=1}^n\xi_{\pi(l)} +
  \xi_{n+1} > T \bigg),
\end{split}
\]
implying that
\[
\mathbb P(\xi_{1} \in dx_1, \cdots, \xi_{n} \in dx_n |\sA_n) = \mathbb P(\xi_{\pi(1)} \in dx_1, \cdots, \xi_{\pi(n)} \in dx_n |\sA_n).
\]
Next, suppose that  $\tilde \pi(\cdot)$ is a permutation of
$\{1,\ldots, n+1\}$. Then, it is possible that $\sum_{i=1}^n \xi_{\tilde\pi(l)} > T$, since $\xi_{n+1} > T - \sum_{l=1}^n \xi_{l} > 0$ conditioned on $\{A(T) = n\}$. Thus, $\Xi_n$ cannot be extended to a larger collection of exchangeable random variables, implying that it is finitely exchangeable.
$\QED$
\end{proof}
Intuitively, the collection $\Xi_n$ is finitely exchangeable owing to the fact that
$\sum_{i=1}^{n} \xi_i \leq T$. This hard bound forces the random
variables to not only take values in a finite interval but to also be
weakly dependent on one another, when conditioned on $\sA_n$.


~Consider $\{(\xi_{n,i},~~i=1,\ldots,n+1),~n \geq 1\}$, a row-wise
independent triangular array of i.i.d. random variables. Define the counting process 
\begin{equation}
  \label{eq:condi-count}
  A_n(t) := \sup \left \{ 0 \leq m \leq n \bigg| S_n\left(\frac{m}{n}
    \right) := \sum_{l=1}^{m} \xi_{n,l} \leq t \right\}.
\end{equation}
By Lemma \ref{prop:exchangeinter}, we know that $\Xi_n := \{\xi_{n,1},\ldots,\xi_{n,n}\}$ is an exchangeable
collection when conditioned on the event $\sA_n := \{\sum_{i=1}^n \xi_{n,i} \leq T <
\sum_{i=1}^{n+1} \xi_{n,i}\} = \{A_n(T) = n\}$. Then.
the collection $\{\Xi_n,~n\geq 1\}$ forms a triangular array of
exchangeable random variables with
independent rows. 

Our analysis will proceed down the triangular array
as $n \to \infty$. Note that the
conditioned probability measure changes for each row of the
array. Classical triangular array results assume that the
array is defined with respect to the \textit{same} probability space. In order
to facilitate the proofs, we first construct a product probability
space that covers the entire array $\{\Xi_n,~n\geq 1\}$.

\subsection{A Product Sample Space}~\label{sec:prod-space}
For a fixed $n \geq 1$ and $T
> 0$, we define the restricted sample space, $(\Omega_n, \sF_n,
\bbP_n)$, where $\Omega_n = \Omega \cap \{A_n(T) = n\}$, $\sF_n := \sigma \{A
\cap \{A_n(T) = n\} : A \in \sF\}$ and $\bbP_n(B) :=
\frac{P(B)}{P(A_n(T)= n)}$ for any $B \in \sF_n$. Clearly $\{\Omega_n,~n\geq 1\}$ forms a partition of $\Omega$. 
Next, we construct a new product space from the restricted sample
spaces $(\Omega_n, \sF_n, \bbP_n)$ as follows. Let, $\bar{\Omega} :=
\Omega_1 \times \Omega_2 \times \cdots$, so that $A \subset
\bar{\Omega} = A_1 \times A_2 \times \cdots$ for sets $A_n \subset
\Omega_n$. The product $\sigma-$algebra, $\bar{\sF} := \sF_1 \otimes
\sF_2 \otimes \cdots$ is the $\sigma-$algebra generated from cylinder sets
of the type $R = \{(\omega_1, \omega_2, \cdots) \in \bar\Omega ~|~
\omega_{i_1} \in A_{i_1}, ~\cdots, ~\omega_{i_k} \in A_{i_k}\}$, where
$(i_1, \ldots, i_l)$ is an arbitrary subset of $\bbN$ of size $k \geq
1$ and $A_{i_n} \in
\sF_n$. The existence of such a product $\sigma-$algebra is well-justified by \cite[Proposition 1.3]{Fo1984}. Finally, we define $\bar{\bbP}(R) = \Pi_{i=1}^k
\bbP_{i_l}(A_{i_l})$, for the cylinder sets. This extends to $\bar{\bbP}
= \bbP_1 \times \bbP_2 \times \cdots$, which is the natural product measure on the
measure space $(\bar{\Omega}, \bar{\sF})$, by standard arguments 
showing that the measure is countably additive on $\bar\sF$. The definition of the
Lebesgue integral on the space $(\bar{\Omega}, \bar{\sF}, \bar\bbP)$ now
follows from standard definitions of integration on product
spaces. However, we introduce some notation to help the following discussion. In
particular, consider a function defined in the following manner: $\bar{X} := X \times \Pi_{l \not = n}
\bbI_{\{\Omega_l\}}$, where $X$ is measurable and
integrable with respect to $(\Omega_n, \sF_n, \bbP_n)$, and
$\bbI_{\{\cdot\}}$ is the indicator function. Then
\[
E_{\bar{\bbP}} [\bar{X}] = \int_{\Omega_n} X d\bbP_n \int_{\Pi_{l \not
  = n} \Omega_l} \bbI_{\{\Omega_l\}} d \bbP_l
\]
is well-defined, and we write this as $E_{\bar\bbP}[X]$, where it is
to be understood that the integration is actually of $\bar X$.

\subsection{Asymptotics of Conditioned Renewal Processes}
Let $\m_n := \bbE[\xi_{n,i} | \sA_n] = \bbE_{\bar
  \bbP}[\xi_{n,i}]$ be the conditional mean of the inter-arrival
times; the exchangeable property implies that these random variables
are identically distributed. Observe that, for a fixed $n$, the
conditioning is with respect to a fixed event $\sA_n$. Therefore,
$\m_n$ is not a random variable.

\begin{lemma} \label{thm:condition-mean-rate}
~The conditional mean $\mu_n$ satisfies

\noindent (i) $\mu_n \to 0$ as $n \to
\infty$,

\noindent (ii) $\text{Var}(\xi_{n,1}) := \bbE[(\xi_{n,1} - \mu_n)^2|A_n(T) =
n] \to 0$ as $n \to \infty$, and

\noindent (iii) $n \mu_n \to T$ as
  $n \to \infty$.
\end{lemma}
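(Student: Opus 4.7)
The plan is to prove (i), (ii), (iii) in sequence; part (ii) will follow quickly from (i), and the bulk of the work lies in (iii). The common starting point is the pointwise bound $\sum_{i=1}^{n}\xi_{n,i}\leq T$, which holds on $\mathcal{A}_n$ by definition of that event.

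For (i), I would take the conditional expectation of this bound and use the finite exchangeability of $\xi_{n,1},\dots,\xi_{n,n}$ from Lemma~\ref{prop:exchangeinter} to identify $\mathbb{E}[\xi_{n,i}\mid\mathcal{A}_n]=\mu_n$ for every $i\leq n$, giving $n\mu_n\leq T$ and hence $\mu_n\leq T/n\to 0$. For (ii), each $\xi_{n,i}$ is a nonnegative summand of a sum bounded by $T$ on $\mathcal{A}_n$, so $\xi_{n,i}\in[0,T]$ and the pointwise inequality $\xi_{n,1}^{2}\leq T\,\xi_{n,1}$ holds. Taking conditional expectation and invoking (i) yields $\mathrm{Var}(\xi_{n,1}\mid\mathcal{A}_n)\leq\mathbb{E}[\xi_{n,1}^{2}\mid\mathcal{A}_n]\leq T\mu_n\to 0$.

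For (iii), I would start by writing $T-n\mu_n=\mathbb{E}[T-S_n\mid\mathcal{A}_n]\geq 0$ and, using the independence of $\xi_{n,n+1}$ from $(\xi_{n,1},\dots,\xi_{n,n})$ under $\mathbb{P}$, express the conditional density of $S_n$ on $\mathcal{A}_n$ as $f_{S_n\mid\mathcal{A}_n}(s)=f_{S_n}(s)\bar F(T-s)/\mathbb{P}(\mathcal{A}_n)$ on $[0,T]$, leading to
\[
T-n\mu_n \;=\; \frac{\int_0^T u\, f_{S_n}(T-u)\,\bar F(u)\,du}{\int_0^T f_{S_n}(T-u)\,\bar F(u)\,du}.
\]
My goal is then to show that the conditional law of $S_n$ concentrates at the right endpoint $T$ as $n\to\infty$. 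Splitting at $\epsilon>0$ gives $\mathbb{E}[T-S_n\mid\mathcal{A}_n]\leq\epsilon+T\,\mathbb{P}(S_n<T-\epsilon\mid\mathcal{A}_n)$, so it suffices to establish $\mathbb{P}(S_n<T-\epsilon\mid\mathcal{A}_n)\to 0$. Using independence of $\xi_{n,n+1}$ I would bound the numerator by $\mathbb{P}(S_n<T-\epsilon,\mathcal{A}_n)\leq F_{S_n}(T-\epsilon)\bar F(\epsilon)$, and by restricting $S_n$ to $[T-\delta,T]$ obtain $\mathbb{P}(\mathcal{A}_n)\geq\bigl(F_{S_n}(T)-F_{S_n}(T-\delta)\bigr)\bar F(\delta)$, reducing the whole question to the large-deviation-type ratio $F_{S_n}(T-\epsilon)/(F_{S_n}(T)-F_{S_n}(T-\delta))\to 0$.

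The hard part will be this last step. Since $n\mathbb{E}[\xi_1]$ eventually exceeds $T$, $\{S_n\leq T\}$ is a rare left-tail event for $S_n$, and the heuristic ``conditioning pins $S_n$ against its upper boundary $T$'' should apply; but making this rigorous requires controlling the decay of $f_{S_n}$ near $T$ in the large-deviation regime. My approach would be to combine a Chernoff upper bound $F_{S_n}(T-\epsilon)\leq e^{\lambda(T-\epsilon)}\bigl(\mathbb{E}\,e^{-\lambda\xi}\bigr)^{n}$ with the crude lower bound $F_{S_n}(T)\geq F(T/n)^{n}$ (coming from the sufficient event $\{\xi_{n,i}\leq T/n\text{ for all }i\leq n\}$), optimize over $\lambda$, and read off an exponential separation of order $e^{-cn\epsilon/T}$ under mild regularity of $F$ near $0$. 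This large-deviation estimate, executed under the paper's minimal hypotheses on $F$, is the technical heart of the lemma.
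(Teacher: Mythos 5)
Your proofs of (i) and (ii) are correct and take a genuinely different, more elementary route than the paper. The paper derives (i) by time-changing the renewal process into a unit-rate Poisson process via the integrated hazard $\Lambda^*$, invoking the \textsc{OS} property to get the explicit formula $\mathbb{P}(\xi_{n,1}>u\mid\mathcal{A}_n)=(1-\Lambda^*(u)/\Lambda^*(T))^n$, and integrating; you instead just take conditional expectations of the pointwise bound $\sum_{i\le n}\xi_{n,i}\le T$ and use exchangeability to get $n\mu_n\le T$, which gives (i) immediately and also hands you the upper half of (iii) for free. Likewise your (ii) via $\xi_{n,1}^2\le T\xi_{n,1}$ is a one-line replacement for the paper's split over $\{\xi_{n,1}>\epsilon\}$. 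For (iii), your reduction to $\mathbb{P}(S_n<T-\epsilon\mid\mathcal{A}_n)\to 0$ is the same reduction the paper makes (the paper bounds $\mathbb{E}[S_n\mid\mathcal{A}_n]\ge(T-\epsilon)\,\mathbb{P}(T-\epsilon<S_n\le T<S_{n+1})/\mathbb{P}(S_n\le T<S_{n+1})$), and you have correctly diagnosed that the whole difficulty is a ratio of two probabilities that \emph{both} vanish. It is worth noting that the paper's own proof elides this: it shows only that the numerator $\mathbb{P}(S_n<T-\epsilon,\,S_{n+1}>T)\le\mathbb{P}(S_n<T)\to 0$ and then asserts the ratio tends to $1$, without comparing the rate of decay of that numerator to that of the denominator $\mathbb{P}(S_n\le T<S_{n+1})$, which also tends to $0$. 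Your insistence on a rate comparison is the more honest reading of what the lemma requires.

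That said, your own resolution of the rate comparison is only a sketch, and as written it does not close. Two concrete issues. First, the lower bound $F_{S_n}(T)\ge F(T/n)^n$ controls $F_{S_n}(T)$, not the window probability $F_{S_n}(T)-F_{S_n}(T-\delta)$ appearing in your denominator; you either need a lower bound on the window itself (e.g.\ $\bigl(F(T/n)-F((T-\delta)/n)\bigr)^n$), or you should replace the window bound by $\mathbb{P}(\mathcal{A}_n)\ge F_{S_n}(T)(1-F(T))$, which requires $F(T)<1$. Second, the claimed ``exponential separation of order $e^{-cn\epsilon/T}$'' from a Chernoff bound with fixed $\lambda$ cannot work: in this regime both $F_{S_n}(T-\epsilon)$ and $F_{S_n}(T)$ decay superexponentially (of order $e^{-\alpha n\log n}$ when $F(x)\sim cx^\alpha$ near $0$), so a fixed-$\lambda$ Chernoff bound of order $e^{-c(\lambda)n}$ does not even beat the crude lower bound; $\lambda$ must grow like $n/T$, and the separation one actually extracts is $\bigl((T-\epsilon)/T\bigr)^{\alpha n}$, which requires some regular-variation-type control of $F$ at the origin that neither you nor the paper states as a hypothesis. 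So the ``technical heart'' you identify remains open in your write-up — but since the paper's proof silently skips the same step, your proposal is not worse than the published argument, and for (i)--(ii) it is cleaner.
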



Now, consider the
sequence of partial sum processes $\{S_n,~n\geq 1\}$ defined as
$S_n(t) := \sum_{i=1}^{\lfloor nt \rfloor} \xi_{n,i} ~\forall t
\in [0,T]$.  

\begin{proposition}[Partial Sum FSLLN] \label{thm:conditioned-fslln}
~The partial sum sequence satisfies 
\[
~S_n := \frac{S_n}{n} \to \frac{e}{T}~\text{in}~ (\sD,U)~\bar \bbP-\text{a.s.}
\]
as $n \to \infty$.
\end{proposition}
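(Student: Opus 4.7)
The plan is to prove the functional SLLN in three stages: (1) extract a sharp second-moment bound on the partial sums from the exchangeability of each row; (2) promote this to $\bar{\bbP}$-a.s.~pointwise convergence via a martingale argument on the product space; and (3) upgrade pointwise convergence to uniform convergence on compacts using the monotonicity of $t \mapsto S_n(t)$.

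For stage (1), Lemma~\ref{prop:exchangeinter} gives that $\Xi_n$ is finitely exchangeable under $\bbP_n$, hence
\[
\mathrm{Var}_{\bbP_n}(S_n(t)) \;=\; \lfloor nt\rfloor\,\sigma_n^2 \,+\, \lfloor nt\rfloor(\lfloor nt\rfloor-1)\,c_n,
\]
with $\sigma_n^2 := \mathrm{Var}_{\bbP_n}(\xi_{n,1})$ and $c_n := \mathrm{Cov}_{\bbP_n}(\xi_{n,1},\xi_{n,2})$. Setting $t = 1$ and using the hard bound $S_n(1) \leq T$ under $\bbP_n$ yields $\mathrm{Var}_{\bbP_n}(S_n(1)) \leq T^2/4$, forcing $|c_n| = O(1/n^2)$. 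Exchangeability combined with $\sum_i \xi_{n,i}^2 \leq (\sum_i \xi_{n,i})^2 \leq T^2$ yields $\sigma_n^2 \leq T^2/n$. Together these give $\mathrm{Var}_{\bbP_n}(S_n(t)/n) = O(1/n^2)$ uniformly in $t \in [0,T]$.

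For stage (2), the triangular-array martingale can be constructed either row-wise via the exchangeable reverse martingale $(S_n(k/n)/k)_{k=n,\ldots,1}$ under $\bbP_n$, or, more conveniently, across rows: under $\bar{\bbP}$ the rows are independent by the construction of Section~\ref{sec:prod-space}, so $\{S_n(t)/n - \bbE_{\bbP_n}[S_n(t)/n]\}_{n \geq 1}$ is a centered independent sequence whose partial sums form a martingale. The $O(1/n^2)$ variance bound from stage (1) makes the variances summable, so the $L^2$-bounded martingale convergence theorem (equivalently, Kolmogorov's convergence criterion) gives $\bar{\bbP}$-a.s.~convergence of the increments to zero. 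The deterministic centering $\bbE_{\bbP_n}[S_n(t)/n] = (\lfloor nt\rfloor/n)\,\mu_n$ is identified using Lemma~\ref{thm:condition-mean-rate}(iii). Stage (3) then extracts uniform convergence: since $t \mapsto S_n(t)/n$ is nondecreasing and the limit $e/T$ is continuous on $[0,T]$, pointwise a.s.~convergence on a countable dense subset of $[0,T]$ upgrades to $\bar{\bbP}$-a.s.~uniform convergence on compact subsets by a Polya--Dini argument, which is convergence in $(\sD, U)$.

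The main obstacle is stage (1): because $\bbP_n$ changes with $n$ in the triangular-array regime, the variance bound has to come purely from the internal exchangeability structure of $\Xi_n$ rather than from a stationary i.i.d.~moment calculation. Obtaining an $O(1/n^2)$ rate -- sharp enough to be summable in $n$ -- requires the joint use of the exchangeability variance identity, the covariance constraint forced by the hard bound $S_n(1) \leq T$, and the asymptotic $n\mu_n \to T$ of Lemma~\ref{thm:condition-mean-rate}(iii). Once that is in hand, the product space $\bar{\bbP}$ reduces the conditioned triangular-array problem to a standard independent-sum SLLN on a single probability space.
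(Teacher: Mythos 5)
Your proof is correct, but it takes a genuinely different route from the paper's. The paper works \emph{within} each row: it builds the martingale difference array $z_{n,l} = (\xi_{n,l}-\mu_n) - \bbE_{\bar\bbP}[(\xi_{n,l}-\mu_n)\mid \sF_{n,l-1}]$ with respect to the reverse (exchangeable) filtration $\sF_{n,l}=\sigma\{\xi_{n,1}-\mu_n,\ldots,\xi_{n,l-1}-\mu_n,\sum_{i=l}^n(\xi_{n,i}-\mu_n)\}$, computes the compensator explicitly via exchangeability, and applies Azuma--Hoeffding to get an exponential tail bound $2e^{-(n-\eps)^2/8n}$ before invoking Borel--Cantelli across rows. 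You instead work \emph{across} rows: a second-moment bound on $S_n(t)/n$ plus independence of the rows under $\bar\bbP$ gives summable variances and hence a.s.\ convergence by Kolmogorov's criterion (equivalently Chebyshev plus Borel--Cantelli I). Both proofs share the same skeleton --- centering via Lemma~\ref{thm:condition-mean-rate}(iii), summable tail probabilities in $n$, and the monotonicity-plus-continuous-limit upgrade to uniform convergence --- so your argument is a valid and arguably more elementary substitute; what the paper's concentration inequality buys is an exponential rate, which would matter for moderate-deviation refinements but is not needed here. Two small remarks on your stage~(1). First, the inference ``$\mathrm{Var}_{\bbP_n}(S_n(1))\le T^2/4$ forces $|c_n|=O(1/n^2)$'' needs both sides: the upper bound on $c_n$ comes from $\mathrm{Var}(S_n(1))\le T^2/4$, but the lower bound requires $\mathrm{Var}(S_n(1))\ge 0$ together with your estimate $\sigma_n^2\le T^2/n$; you supply all the ingredients but should assemble them explicitly. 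Second, the elaborate exchangeability computation is not actually necessary for the conclusion you need: since $0\le S_n(t)\le S_n(1)\le T$ under $\bbP_n$ for every $t$, Popoviciu's inequality already gives $\mathrm{Var}_{\bbP_n}(S_n(t)/n)\le T^2/(4n^2)$ uniformly in $t$, so your claim that the covariance identity and the asymptotic $n\mu_n\to T$ are jointly \emph{required} for the $O(1/n^2)$ rate overstates the difficulty --- the hard bound alone suffices, and Lemma~\ref{thm:condition-mean-rate}(iii) is needed only to identify the deterministic centering, exactly as in the paper.
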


Next, we prove an FCLT for the partial sum sequence $\{S_n,~n\geq
1\}$. Specifically, consider $\{\phi_{n,i}, ~~ l = 1,\ldots,n\}$
defined with respect to $\Xi_n$ as
\[
\phi_{n,i} := \frac{\xi_{n,i} - \mu_n}{\sqrt{n}}.
\]
Following \cite{Mc1974} and \cite[Theorem 24.2]{Bi68}, the following theorem characterizes the sequence $\phi_{n,i}$ and shows
that the partial sums of these random variables converge weakly to a
Brownian bridge process. We assume that the Brownian bridge process
$W^0$ is well-defined with respect to the product sample space $(\bar
\Omega, \bar \sF, \bar \bbP)$.

\begin{proposition} \label{thm:conditioned-fclt} 
~Let $\{(\phi_{n,1},\ldots,\phi_{n,n}),~n \geq 1\}$  be the triangular
  array of random variables defined above and define $\{\hat S_n(t) :=
\sum_{i=1}^{\lfloor n t \rfloor} \phi_{n,i},  t \in [0,T]\}$. Then, \\
\noindent (i) $\sum_{i=1}^n \phi_{n,i} \To_{\bar \bbP} 0$, \\
\noindent (ii) $\max_{1 \leq i \leq n} |\phi_{n,i}| \To_{\bar \bbP} 0$, \\
\noindent (iii) $\sum_{i=1}^n \phi_{n,i}^2 \To_{\bar \bbP} 1$, and\\
\noindent (iv) 
\(
\hat S_n \To_{\bar \bbP} W^0 ~~\text{in}~~ (\sD,U),
 \)
 as $n \to \infty$.
\end{proposition}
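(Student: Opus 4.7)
The plan is to verify the three pointwise claims (i)--(iii) using the exchangeability of $(\xi_{n,1},\ldots,\xi_{n,n})$ under $\bar\bbP$ together with Lemma~\ref{thm:condition-mean-rate}, and then invoke a martingale FCLT (as in McLeish or Billingsley's Theorem~24.2) to conclude (iv).

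For (i), I would rewrite
\[
\sum_{i=1}^n \phi_{n,i} \,=\, \frac{1}{\sn}\Big[\sum_{i=1}^n \xi_{n,i} - n\mu_n\Big].
\]
On the event $\sA_n$ we have $T - \xi_{n,n+1} < \sum_{i=1}^n \xi_{n,i} \leq T$, so the bracket is bounded in absolute value by $|T - n\mu_n| + \xi_{n,n+1}$. Lemma~\ref{thm:condition-mean-rate}(iii) gives $T - n\mu_n \to 0$, while $\xi_{n,n+1}$ remains tight under $\bar\bbP$ (its law being dominated by an unconditioned inter-arrival time shifted by the bounded excess); division by $\sn$ then yields the claim. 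For (ii), the deterministic bound $\xi_{n,i}\leq T$ valid on $\sA_n$ for $i \leq n$ yields $\max_i |\phi_{n,i}| \leq (T + \mu_n)/\sn \to 0$ surely, so no probabilistic argument is needed.

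For (iii), I would compute the first two moments of $\sum_i \phi_{n,i}^2 = n^{-1}\sum_i (\xi_{n,i}-\mu_n)^2$ under $\bar\bbP$ using exchangeability: the mean reduces to $\mathrm{Var}_{\bar\bbP}(\xi_{n,1})$, and the variance reduces to a single common pairwise covariance $\mathrm{Cov}_{\bar\bbP}((\xi_{n,1}-\mu_n)^2,(\xi_{n,2}-\mu_n)^2)$ by exchangeability. Lemma~\ref{thm:condition-mean-rate} together with the hard bound $\xi_{n,i}\leq T$ should then deliver $L^2(\bar\bbP)$ (hence in-probability) convergence to the claimed limit. For (iv), I would form the triangular martingale difference array $d_{n,k} := \phi_{n,k} - \bbE_{\bar\bbP}[\phi_{n,k} \mid \sF_{n,k-1}]$ with filtration $\sF_{n,k}:=\sigma(\phi_{n,1},\ldots,\phi_{n,k})$. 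Claim (ii) supplies the conditional Lindeberg condition, (iii) drives the predictable quadratic variation $\sum_{k\leq \lfloor nt\rfloor}\bbE_{\bar\bbP}[d_{n,k}^2\mid \sF_{n,k-1}]$ to a deterministic limit, and (i) together with exchangeability shows the compensator $\sum_k \bbE_{\bar\bbP}[\phi_{n,k}\mid \sF_{n,k-1}]$ is asymptotically negligible uniformly in $t$. The martingale FCLT then gives weak convergence to a continuous Gaussian martingale whose terminal value is zero (by (i)); matching finite-dimensional covariances $s\wedge t - st$ identifies the limit as $W^0$.

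The main obstacle I anticipate is (iii). The exchangeable but non-independent structure under $\bar\bbP$ requires sharp control of the pairwise covariance induced by the hard sum constraint $\sum_i \xi_{n,i}\leq T$, and careful bookkeeping is needed to see how the $1/\sn$ normalization interacts with the conditional variance $\mathrm{Var}_{\bar\bbP}(\xi_{n,1}) \to 0$ from Lemma~\ref{thm:condition-mean-rate}(ii). A plausible alternative is a Radon--Nikodym comparison between $\bar\bbP$ and the unconditioned i.i.d.\ law, applying the classical Donsker theorem on the ambient space and then pushing through the conditioning, which might sidestep the direct covariance computation at the cost of different technical overhead.
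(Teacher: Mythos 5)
Your treatment of (i) and (ii) is fine and in fact more elementary than the paper's: on $\sA_n$ one has $0\le\sum_{i=1}^n\xi_{n,i}\le T$ and $0\le n\m_n\le T$, so $\left|\sum_{i=1}^n\phi_{n,i}\right|\le T/\sn$ and $\max_i|\phi_{n,i}|\le (T+\m_n)/\sn$ deterministically; the detour through tightness of $\xi_{n,n+1}$ is unnecessary, and the domination claim you invoke for it is itself unproved. (The paper argues (i) by showing the two deviation events are eventually empty and (ii) by Chebyshev via $\text{Var}(\xi_{n,1})\to 0$; these amount to the same observations.) Your overall architecture --- verify (i)--(iii), then conclude (iv) --- also matches the paper's, except that for (iv) the paper simply cites the exchangeable-array CLT \cite[Theorem 24.2]{Bi68}, whose hypotheses are exactly (i)--(iii); its martingale/Azuma machinery appears only inside the proofs of the FSLLN and of (iii), with respect to the ``residual'' filtration $\sF_{n,l}=\sigma(\phi_{n,1}^2,\ldots,\phi_{n,l-1}^2,\sum_{i\ge l}\phi_{n,i}^2)$, not the natural forward one.

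Two steps of your plan do not go through as written. First, for (iii) your own opening computation refutes the claim under the literal normalization: exchangeability gives $\bbE_{\bar\bbP}\left[\sum_{i=1}^n\phi_{n,i}^2\right]=\frac1n\sum_{i=1}^n\bbE_{\bar\bbP}[(\xi_{n,i}-\m_n)^2]=\text{Var}(\xi_{n,1})$, and Lemma~\ref{thm:condition-mean-rate}(ii) says this tends to $0$, not $1$. No control of the pairwise covariance can then produce $L^2$ convergence to $1$; the array must be renormalized (since $\xi_{n,i}$ is of order $1/n$, the natural choice is $\phi_{n,i}=\sn(\xi_{n,i}-\m_n)$, possibly divided by a limiting standard-deviation constant). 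You correctly flag (iii) as the main obstacle but do not notice that the obstacle lies in the normalization itself. Second, your scheme for (iv) is internally inconsistent: a martingale FCLT with an asymptotically negligible compensator and predictable quadratic variation tending to $t$ yields a standard Brownian motion, whose terminal value is $N(0,1)$; no continuous Gaussian \emph{martingale} has covariance $s\wedge t-st$, because $W^0$ is not a martingale. In fact the compensator is of order one: by exchangeability $\bbE_{\bar\bbP}[\phi_{n,k}\mid\sF_{n,k-1}]=\frac{1}{n-k+1}\left(\bbE_{\bar\bbP}\left[\sum_{i=1}^n\phi_{n,i}\mid\sF_{n,k-1}\right]-\sum_{i<k}\phi_{n,i}\right)\approx-\frac{1}{n-k+1}\sum_{i<k}\phi_{n,i}$, so the compensator behaves like $-\int_0^t\frac{\hat S_n(s)}{1-s}\,ds$, which is exactly the bridge drift. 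You must either carry this drift along and identify the limit as the solution of the bridge SDE, or bypass the martingale FCLT entirely and cite the exchangeable-array theorem as the paper does.
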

The conditions in Proposition \ref{thm:conditioned-fclt} are natural in the context of the
conditioned limit result we seek. Note that the conditioned limit result is akin
to proving a diffusion limit for a tied-down random walk (see
\cite{Li1970,Me1989}). The first condition here enforces a type of
``asymptotic tied down'' property. The second condition is a necessary and
sufficient condition for the limit process to be infinitely divisible
(see \cite{ChTe1958} for more on this). The third condition is
necessary to ensure that the Gaussian limit, when $t = 1$, has
variance $1$. 

Now, define 
and the `inverse' process, corresponding to $S_n$, as
\[
S_n^{-1} (t) := \inf \left\{p \in [0,1] \bigg|  A_n(p) > t \right\},
\]
and the scaled counting process $\bar A_n := n^{-1} A_n$.

\begin{lemma} \label{lem:counting-inverse}
We have,

\noindent (i)
\(
\sup_{0 \leq t \leq T} |\bar A_n(t) - S_n^{-1}(t) | \to 0~\bar \bbP-\text{a.s.}
\)
as $n \to \infty$, and\\
\noindent (ii)
\(
\sn \sup_{0 \leq t \leq T} \left(\bar A_n(t) - S_n^{-1}(t)
\right) \to 0~\bar \bbP-\text{a.s.}
\)
as $n \to \infty$.
\end{lemma}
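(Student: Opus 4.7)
The lemma is essentially a rephrasing of the elementary duality between the partial-sum process $S_n$ and its associated renewal counting process $A_n$. The plan is to compute $S_n^{-1}(t)$ explicitly in terms of $\bar A_n(t)$ and observe that the two processes differ by a deterministic $1/n$.

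First I would unwind the step structure of $S_n$: it is right-continuous and constant on each interval $[k/n,(k+1)/n)$ with value $\tau_k := \sum_{l=1}^k \xi_{n,l}$, while $A_n(t)$ is the largest integer $m$ with $\tau_m \leq t$. This gives the basic identity $S_n(p) > t \iff \lfloor np \rfloor \geq A_n(t)+1 \iff p \geq (A_n(t)+1)/n$, so interpreting $S_n^{-1}$ as the standard right-continuous generalized inverse of $S_n$, for $t \in [0,\tau_n)$ one obtains
\[
S_n^{-1}(t) = \frac{A_n(t)+1}{n} = \bar A_n(t) + \frac{1}{n}.
\]
Under the conditioning event $\sA_n$, $\tau_n \leq T$ with probability one, so on the remaining interval $[\tau_n,T]$ both $\bar A_n(t) = 1$ and (using the convention that an empty infimum equals the right endpoint of the allowed range) $S_n^{-1}(t) = 1$.

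From this closed form, $\bar A_n(t) - S_n^{-1}(t)$ takes only the two values $-1/n$ on $[0,\tau_n)$ and $0$ on $[\tau_n,T]$, regardless of the realization of the inter-arrival times. Part (i) then follows from the deterministic bound $\sup_{0 \leq t \leq T} |\bar A_n(t) - S_n^{-1}(t)| \leq 1/n \to 0$. For part (ii), the same computation shows that the supremum (without the absolute value) equals $0$, attained at $t = T$, so $\sqrt{n}$ times the sup is $0$ for every $n$; under the alternative convention $\inf \emptyset = +\infty$ the sup becomes $-1/n$ and $\sqrt{n}\cdot(-1/n) = -1/\sqrt{n} \to 0$.

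The only real obstacle is notational: one needs to fix the boundary convention for $S_n^{-1}$ at $t = T$, since the infimum is over the empty set there. Once that is handled, the proof is purely definitional and holds deterministically along every sample path, not merely $\bar \bbP$-a.s., and no stochastic argument beyond the conditioning identity $\tau_n \leq T$ on $\sA_n$ is required.
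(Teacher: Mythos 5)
Your proof is correct and follows essentially the same route as the paper's: both arguments are purely deterministic consequences of the step structure of $S_n$, yielding the pointwise sandwich $0 \leq S_n^{-1}(t) - \bar A_n(t) \leq 1/n$ (you obtain the exact value rather than the two-sided bound) from which (i) and (ii) follow immediately. Your explicit treatment of the empty-infimum convention at $t \geq \sum_{l=1}^n \xi_{n,l}$ is a point the paper's proof passes over silently, but it does not change the substance of the argument.
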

We can now state and prove the main result of this section, proving
the FSLLN and FCLT for the counting process $A_n$
in~\eqref{eq:condi-count}.

 \begin{theorem} \label{thm:counting}
The counting process $A_n$ satisfies

\noindent (i) $\bar A_n \to \frac{e}{T} ~ \text{in} ~(\sD,U)~\bar \bbP-$a.s. as $n \to \infty$,
where $e : [0,\infty) \to [0,\infty)$ is the identity map, and\\
\noindent (ii) $\sn \left( \bar A_n - \frac{e}{T} \right) \To_{\bar \bbP} -W^0 ~ \text{in}
~(\sD,U)$ as $n \to \infty$, where $W^0$ is the Brownian
bridge limit process observed in Proposition \ref{thm:conditioned-fclt}.
 \end{theorem}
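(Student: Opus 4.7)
The plan is to derive both parts of Theorem \ref{thm:counting} by a standard inversion argument: I use Lemma \ref{lem:counting-inverse} to reduce statements about the counting process $\bar A_n$ to corresponding statements about the inverse $S_n^{-1}$ of the partial-sum process, then transfer the FSLLN and FCLT for $S_n$ from Propositions \ref{thm:conditioned-fslln} and \ref{thm:conditioned-fclt} through the inverse map on $(\sD,U)$.

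For part (i), Lemma \ref{lem:counting-inverse}(i) reduces the FSLLN to showing $S_n^{-1} \to e/T$ uniformly $\bar\bbP$-a.s. Proposition \ref{thm:conditioned-fslln} supplies $S_n/n \to e/T$ uniformly a.s.; since the fluid limit is continuous, linear, and strictly monotone, the inversion map is continuous at it in the uniform topology (see \cite[Chapter 13]{Wh01b}). The continuous mapping theorem then yields $S_n^{-1} \to (e/T)^{-1}$ uniformly a.s., which, combined with Lemma \ref{lem:counting-inverse}(i) and the identification of $(e/T)^{-1}$ with $e/T$ under the switch of domain between $[0,1]$ and $[0,T]$, proves part (i).

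For part (ii), Lemma \ref{lem:counting-inverse}(ii) lets me replace $\sn(\bar A_n - e/T)$ by $\sn(S_n^{-1} - e/T)$ up to a uniformly a.s.\ negligible error. Writing $S_n(p) = \lfloor np \rfloor \mu_n + \sn\, \hat S_n(p)$, with $\hat S_n \Rightarrow W^0$ from Proposition \ref{thm:conditioned-fclt} and $n\mu_n \to T$ from Lemma \ref{thm:condition-mean-rate}(iii), expresses $S_n$ as a linear fluid trajectory plus a $\sn$-scale Brownian-bridge fluctuation. A pathwise implicit-function expansion of the identity $S_n(S_n^{-1}(t)) = t$, combined with the continuous mapping theorem and part (i) applied to the (random) argument of $\hat S_n$, then yields the FCLT limit $\sn(S_n^{-1} - e/T) \Rightarrow -W^0$ in $(\sD,U)$; the sign flip is intrinsic to the inversion, and the time rescaling induced by inversion is absorbed into $W^0$ under the paper's convention (cf.\ the analogous classical renewal FCLT inversion in \cite[Ch.~7]{Wh01b}).

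The main obstacle is aligning the centering supplied by Proposition \ref{thm:conditioned-fclt} with the linear fluid trajectory at the FCLT scale, i.e.\ showing that the deterministic bias $\lfloor np \rfloor \mu_n - pT$ is uniformly $o(1/\sn)$ in $p$. This amounts to a quantitative rate for $n\mu_n \to T$, which is not stated explicitly in Lemma \ref{thm:condition-mean-rate}(iii). I would extract the needed rate by combining the hard constraint $\sum_{i=1}^n \xi_{n,i} \le T$ on $\sA_n$ with the weak limit $\hat S_n(1) \Rightarrow W^0(1) = 0$, which identifies the endpoint deficit $T - n\mu_n$ as $O(1/\sn)$ in distribution; a monotone uniformization in $p$, using that $S_n$ is nondecreasing, then suffices to absorb the bias into the fluctuation term and vindicate the inverse-mapping argument.
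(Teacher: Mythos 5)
Your overall route is the same as the paper's: both parts are obtained by reducing $\bar A_n$ to the inverse process $S_n^{-1}$ via Lemma~\ref{lem:counting-inverse} and then transferring the FSLLN of Proposition~\ref{thm:conditioned-fslln} and the FCLT of Proposition~\ref{thm:conditioned-fclt} through the inverse map; where the paper simply cites \cite[Theorems 7.8.1 and 7.8.2]{Wh01b}, you re-derive the inverse FCLT by a pathwise implicit-function expansion, which is the same content. Part (i) of your argument is fine and coincides with the paper's.

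The one place you go beyond the paper is also the one place your argument does not close. You correctly flag that Proposition~\ref{thm:conditioned-fclt} centers the partial sums at $\lfloor np \rfloor \mu_n$ while the theorem centers at the linear fluid trajectory, so the inversion argument needs $\sup_p |\lfloor np\rfloor \mu_n - pT| = o(1/\sn)$, i.e.\ a rate $T - n\mu_n = o(1/\sn)$ that Lemma~\ref{thm:condition-mean-rate}(iii) does not supply; the paper's two-line proof passes over this in silence (elsewhere it only records the one-sided bound coming from $n\mu_n \le T$). However, your proposed patch is circular: $T - n\mu_n = \bbE_{\bar\bbP}[T - S_n(1)]$ is a deterministic quantity, and combining the hard constraint $S_n(1) \le T$ with $\hat S_n(1) \Rightarrow_{\bar\bbP} 0$ only controls $\sn\,(S_n(1) - n\mu_n)$, which bounds $\sn\,(T - n\mu_n)$ \emph{from below} by an asymptotically negligible term --- information you already have from $T - n\mu_n \ge 0$. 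What is actually required is an \emph{upper} bound on the expected age $\bbE_{\bar\bbP}[T - S_n(1)]$ at scale $o(1/\sn)$, for instance via the overshoot variable $\xi_{n,n+1}$ on the event $\sA_n$ or by sharpening the argument used to prove Lemma~\ref{thm:condition-mean-rate}(iii); as written, this step is asserted rather than proved. Apart from this (which is a gap you share with, and indeed inherited from, the paper's own terse treatment), your proof matches the paper's.
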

\begin{proof}
Applying \cite[Theorem 7.8.1]{Wh01b}, the FSLLN in Proposition
\ref{thm:conditioned-fslln} implies the convergence of the
corresponding inverse function, $S_n^{-1}$ to $e$. Part
(i) of Lemma \ref{lem:counting-inverse}, in turn, implies the
convergence of the counting process $\bar A_n$. This proves part (i).

Next, 
\cite[Theorem 7.8.2]{Wh01b} and the FCLT in Proposition~\ref{thm:conditioned-fclt}~(iv)
implies that $\sn (S_n^{-1} - e T^{-1}) \Rightarrow -
W^0$. Part~(ii) of the theorem follows from Lemma \ref{lem:counting-inverse}~(ii).
$\QED$ 
\end{proof}


We now proceed to the proofs of the lemmas and propositions.
\begin{proof}[Proof of Lemma~\ref{thm:condition-mean-rate}]
\noindent~$(i)$ The conditional intensity function (CIF) of $A_n(t)$ is $\l^*(t)dt
  := \bbE[A_n(dt)| \mathcal{H}_t] = \frac{f(t) dt}{1 - F(t)} \geq 0$,
  where $\mathcal{H}_t$ is the filtration generated by $A_n$. Let
  $\L^*(t) = \int_0^t \l^*(s) ds$ be the integrated CIF, so that
  $A_n(t) - \L^*(t)$ is a compensated Martingale
  process. \cite[Theorem 7.4.1]{DaVe2003a} shows that $\tilde A_n(t) =
 A_n(\L^{*^{-1}}(t))$ is a unit rate Poisson process. That is, if
  $\{T_1, T_2, \ldots\}$ is a realization of the event epochs of process $A_n$,
  then $\{\tT_i = \L^*(T_i)\}$ is equal (in distribution) to a
  realization from a unit rate Poisson process. As $\L^*$ is
  non-decreasing, it follows that $\{\tT_{n+1} > \L^*(T) \geq \tT_n\}$
  if and only if $\{T_{n+1} > T \geq T_{n}\}$. \cite[Theorem 7.4.1]{DaVe2003a} implies that $\{
  A_n(T) = n\} = \{\tilde A_n(\L^*(T)) = n\}$.

Now, we have
\[
\bbP(\xi_{n,1} > u |  A_n(T) = n) = \bbP(\phi_1 \geq \L^*(u) | \tilde A_n(\L^*(T)) = n),
\]
where $\phi_1 := \L^*(\xi_{n,1})$. Recall that a Poisson process
satisfies the \textsc{OS} property in Definition~\ref{def:OS}. It follows that
\[
\bbP(\phi_1 \geq \L^*(u) | \tilde A_n(\L^*(T)) = n) = \left( 1 - \frac{\L^*(u)}{\L^*(T)} \right)^n.
\]

Now, by definition
\begin{eqnarray}
  \nonumber
  \mu_n &=& \int_0^T \mathbb P(\xi_1 > u | A_n(T) = n) du\\
  \label{eq:mu-integral}
  &=& \int_0^T \left(1 - \frac{\Lambda^*(u)}{\Lambda^*(T)} \right)^n du.
\end{eqnarray}
Since $\Lambda^*(t)$ is a non-decreasing function of $t$, it follows
that the integrand in \eqref{eq:mu-integral} is bounded above by
$1$ for all $n \geq 1$. Furthermore, for every $t \in (0,T]$, $\lim_{n \to \infty}
\left( 1 - \frac{\Lambda^*(t)}{\Lambda^*(T)} \right)^n =
0$. Therefore, by the bounded convergence theorem it follows that
$\lim_{n \to \infty} \mu_n = 0$.

\noindent~$(ii)$ Observe that, by definition,
\(
\int \xi_{n,1}^2 d \bbP_n \leq T^2 ~\forall~n \geq 1,
\)
implying $\Xi_n$ is a uniformly integrable (U.I.) family of random
variables. Then, fixing $\epsilon > 0$, part $(i)$ of the lemma implies that $\bbP_n(\xi_{n,1} >
\e) \to 0$ as $n \to \infty$. We now have
\begin{align*}
  \int \xi_{n,1}^2 d \bbP_n &= \int_{\{\xi_{n,1}> \epsilon\}} \xi_{n,1}^2
                              d \bbP_n + \int_{\{\xi_{n,1} \leq \epsilon\}} \xi_{n,1}^2
                              d \bbP_n\\
                            &\leq T^2 \bbP_n\left( \xi_{n,1} >
                              \epsilon\right) + \epsilon \int
                              \xi_{n,1} d\bbP_n,
\end{align*}
implying that
\(
\int \xi_{n,1}^2 d \bbP_n \to 0
\)
as $n \to \infty$. Thus, $\text{Var}(\xi_{n,1}) \to 0$ as $n \to \infty$.

\noindent~$(iii)$ Let $S_n := \sum_{i=1}^n \xi_{n,i}$ and
$S_{n+1} = S_n + \xi_{n,n+1}$. Since the random variables $\xi_{n,i}$ are identical in distribution,
\[
\m_n = \bbE[\xi_{n,1} | A_n(T) =n ] = \frac{1}{n} \bbE[S_n | A_n(T) = n].
\]
By definition $\{A_n(T) = n\} =
\{S_n \leq T < S_{n+1}\}$, implying that
\(
  \bbE[S_n | A_n(T) = n] \leq T
\)
for all $n \geq 1$. Next, fix $\epsilon > 0$, and note that
\begin{eqnarray*}
\bbE [S_n | A_n(T) = n] &=& \frac{\bbE[S_n \mathbf 1_{\{S_n \leq T <
    S_{n+1}\}}]}{\mathbb P(A_n(T) = n)}\\
  & \geq & (T - \epsilon) \frac{\mathbb P(T-\epsilon < S_n \leq T <
           S_{n+1})}{\mathbb P(S_n \leq T < S_{n+1})}.
\end{eqnarray*}
Now, consider the partition of $\{S_n \leq T < S_{n+1}\} =
\{T-\epsilon < S_n \leq T < S_{n+1}\}
\cup \{S_n < T-\epsilon, S_{n+1} > T\}$. We have that, under $\mathbb P$,
\[
\mathbb P(S_n < T-\epsilon, S_{n+1} > T) \leq \mathbb P(S_n < T).
\]
By the strong law of large numbers, it follows that $S_n \to
\infty$ $\bbP$-a.s. as $n \to \infty$. Therefore, $\mathbb P(S_n < T) \to 0$
as $n \to \infty$. Thus, it follows that
\[
\lim_{n \to \infty} \frac{\mathbb P(T-\epsilon < S_n \leq T <
           S_{n+1})}{\mathbb P(S_n \leq T < S_{n+1})} = 1.
\]
Therefore, 
\begin{equation} \label{eq:2}
\liminf_{n \to \infty} \bbE [S_n | A_n(T) = n] \geq (T-\epsilon).
\end{equation}
Since $\epsilon > 0$ is arbitrary, it follows that $\lim_{n \to
  \infty} n \mu_n = \lim_{n \to \infty} \bbE[S_n|A_n(T) = n] = T$.

$\QED$
\end{proof}

  

\begin{proof}[Proof of Proposition~\ref{thm:conditioned-fslln}]
~Without loss of generality let $T = 1$. Consider, for $t\in[0,1]$,
\[
\left | \sum_{l=1}^{\lfloor nt \rfloor} \xi_{n,l} - t \right | \leq
\left |\sum_{l=1}^{\lfloor nt \rfloor} (\xi_{n,l} - \m_n) \right | + \left |\lfloor nt \rfloor \mu_n - t \right|.
\]
The second term on the RHS tends to $0$, as a consequence of Lemma
\ref{thm:condition-mean-rate}~(iii). 
Define the martingale sequence,
\(
z_{n,l} := (\xi_{n,l} - \m_n) - \bbE_{\bar \bbP}[(\xi_{n,l} - \m_n) | \sF_{n,l-1}],
\)
 where $\sF_{n,l} := \s \{(\xi_{n,1} - \m_n), \ldots, (\xi_{n,l-1} -
 \m_n), \sum_{i=l}^n (\xi_{n,i} - \m_n)\}$. Note that expectation is
 taken with respect to the measure $\bar \bbP$, implying that there is (implicitly)
 a conditioning with respect to the event $\sA_n$ as well.

 It follows that
\begin{align*}
\sum_{i=j}^n (\xi_{n,i} - \m_n) &= \sum_{i=j}^n \bbE_{\bar \bbP}[(\xi_{n,i} - \m_n) |
\sF_{n,j-1}]\\ &= (n-j+1) \bbE_{\bar \bbP}[\xi_{n,j} - \m_n | \sF_{n,j-1}],
\end{align*}
where the last equality follows from the fact that the random
variables are exchangeable (and hence identically distributed) under
the measure $\bar \bbP$. This implies that
\begin{align*}
z_{n,l} &= (\xi_{n,l} - \m_n) - \frac{1}{n-l+1} \sum_{i=l}^n (\xi_{n,i}
- \m_n)
\end{align*}
 Using the fact that $\xi_{n,l} \in [0,1]$, under the measure $\bar
 \bbP$, it follows that
\begin{eqnarray}
\nonumber
\sum_{l=1}^n \frac{1}{n-l+1} \sum_{i=l}^n \xi_{n,i} &=&
\sum_{l=0}^{n-1} \frac{1}{n-l} \sum_{j=l+1}^n \xi_{n,j}\\
\nonumber
&\leq& \left( \sum_{l=0}^{n-1}\frac{1}{n-l}
       \left(n-l-1\right)\right)\\
\nonumber
&\leq& n,
\end{eqnarray}
and consequently,
\[
\sum_{l=1}^n (\xi_{n,l} - \m_n) \leq \sum_{l=1}^n z_{n,l} + n.
\]
On the other hand, observe that
\begin{align*}
\sum_{l=1}^n (\xi_{n,l} - \m_n) &= \sum_{l=1}^n z_{n,l} +
  \sum_{l=1}^n \frac{1}{n-l+1} \sum_{i=l}^n \xi_{n,i} - n \m_n\\
  &\geq \sum_{l=1}^n z_{n,l} - n,
\end{align*}
where we have used the fact that $\m_n \leq 1$ in the final
inequality. Now, fix $\e > 0$ and use the inequalities above to obtain 
\(
\left \{\omega \in \bar \Omega : \left|\sum_{l=1}^n (\xi_{n,l} - \m_n
  )\right| > \epsilon \right\}
\)
\begin{align*}
&= \left\{ \omega \in \bar \Omega :
                                 \sum_{l=1}^n (\xi_{n,l} - \m_n) > \e
                                 \right\} \cup \left\{ \omega \in \bar \Omega :
                                 \sum_{l=1}^n (\xi_{n,l} - \m_n) < -\e
                                 \right\} \\
 &\subseteq \left \{\omega \in \bar \Omega : \sum_{l=1}^n z_{n,l} +n >
   \e \right\} \cup \left\{\omega \in \bar \Omega : \sum_{l=1}^n z_{n,l} <
   n -\e \right\}.
\end{align*}
Since $|z_{n,l}| \leq 2$, the Azuma-Hoeffding inequality implies that
\[
\bar \bbP \left(\sum_{l=1}^n z_{n,l} > \e - n \right) \leq \exp \left(- \frac{\left(\e - n \right)^2}{8n} \right),
\]
and
\[
\bar \bbP \left(\sum_{l=1}^n z_{n,l} < -\e \right) \leq \exp \left(- \frac{(n-\e)^2}{8n} \right).
\]
Therefore, using the union bound
\[
\bar \bbP \left( \left|\sum_{l=1}^n \xi_{n,l} - \m_n \right| >
  \e\right) \leq 2 e^{-(n-\e)^2/8n}.
\]


Now, by Cauchy's ratio test, it can
be readily verified that for any $\e > 0$
\[
\sum_{n=1}^{\infty} \bar \bbP \left( \left|\sum_{l=1}^n(\xi_{n,l} -
    \m_n) \right| > \e \right) < \infty.
\]
Thus, by the First Borel-Cantelli Lemma, $\bar
\bbP(|\sum_{l=1}^n(\xi_{n,l} - \m_n)| > \e ~ \text{i.o.} ) = 0$. Therefore,
$\sum_{l=1}^n(\xi_{n,l} - \m_n) \to 0 ~\bar \bbP-$a.s. as $n \to
\infty$. Clearly, this holds for any $t \in [0,1]$, so that
\(
\sum_{l=1}^{\lfloor nt \rfloor}(\xi_{n,l} - \m_n) \to 0~\bar\bbP-
\)
a.s. as $n \to \infty$. The proof of uniform convergence on $[0,1]$
follows from standard arguments (see \cite[Chapter 5]{ChYa01} for instance).
$\QED$ 
\end{proof}

\begin{proof}[Proof of Proposition~\ref{thm:conditioned-fclt}]
First, note that the exchangeability of $\{\phi_{n,i}\}$ follows directly from that of
$\{\xi_{n,i}\}$. 

\noindent $(i)$ The proof follows by using the definition of $\bar \bbP$. Fix $\e > 0$, and consider 
\begin{align*}
\bar \bbP\left( \left|\sum_{l=1}^n \phi_{n,l} \right| > \e \right) =&~
                                                                      \bar
                                                                     \bbP
                                                                      \left(
                                                                      \left|\sum_{l=1}^n
                                                                      \phi_{n,l}
                                                                      \right|
                                                                      >
                                                                      \e,
                                                                      A_n(T)
                                                                      =
                                                                      n
                                                                      \right)\\
=&~\bar\bbP\left( \left|\sum_{l=1}^n \xi_{n,l} - n \mu_n\right| > \e
  \sqrt{n}, A_n(T) = n \right)\\
=&~\bar\bbP\bigg(\sum_{l=1}^n \xi_{n,l} > \e \sqrt{n} + n \mu_n ,
      A_n(T) = n \bigg) \\ &\qquad+ \bar\bbP\left(\sum_{l=1}^n \xi_{n,l} < -\e
      \sqrt{n} + n \mu_n , A_n(T) = n\right).
\end{align*}

Recall that $\left\{A_n(T) = n \right\} = \left\{\sum_{l=1}^n
  \xi_{n,l} \leq T < \sum_{l=1}^n \xi_{n,l} + \xi_{n,n+1} \right\}$.
It follows that for any $\omega \in \sB_n := \left\{\sum_{l=1}^n
  \xi_{n,l} > \e \sqrt{n} + n \mu_n , A_n(T) = n) \right\}$ we have
\(
T \geq \sum_{l=1}^n \xi_{n,l} > \e \sqrt{n} + n \mu_n
\)
and that $n \mu_n = E_{\bar \bbP}[\sum_{l=1}^n
\xi_{n,l}] \leq T$. Therefore, $n
\mu_n$ is uniformly bounded (for every $n \geq 1$). Then,
for a given $T$, there exists a $n_T$ such that for every $n > n_T$,
$\sqrt{n} \e + n \mu \geq T$. As $\e > 0$ is arbitrary,
asymptotically, $\sB_n$ is an impossible event. 

Next, consider the event $\sC_n :=
\{\sum_{l=1}^n \xi_{n,l} < -\e \sqrt{n} + n \mu_n , A_n(T) =
n\}$. Using the facts that $\xi_{n,l} \geq 0$ and $n
\mu_n \leq T$ for all $n$, we have
\(
- \e \sqrt{n} + T \geq -\e \sqrt{n} + n \m_n > \sum_{l=1}^n \xi_{n,l} \geq 0.
\)
Clearly, as $n \to \infty$, $-\e \sqrt{n} + T \to -\infty$ implying
that $-\e \sqrt{n} + n \m_n \to -\infty$. Since $\e > 0$ is arbitrary,
for large enough $n$ $\sC_n$ too is an impossible event. It follows that
$\phi_{n,l} \To_{\bar \bbP} 0$ as $n \to \infty$.

\noindent $(ii)$ First, for a fixed $\e > 0$ the union bound implies that
\begin{align*}
\bar \bbP \left(\max_{1 \leq l \leq n} \left|\phi_{n,l} \right| > \e
\right) &\leq \sum_{l=1}^n \bar \bbP\left( \left|\phi_{n,l} \right| >
  \e \right)\\
&\leq n \bar \bbP \left( \left|\phi_{n,1} \right| > \e \right)\\
&\leq n \frac{E_{\bar \bbP} \left|\xi_{n,l} - \m_n \right|^2}{n\e^2 } = \frac{\text{Var}(\xi_{n,1})}{\e^2},
\end{align*}
where the latter expression follows by an application of Chebyshev's
inequality under the $\bar \bbP$ measure.~
Lemma~\ref{thm:condition-mean-rate} implies that $\text{Var}(\xi_{n,1}) \to 0$ as $n \to \infty$. As $\e > 0$ is arbitrary, (ii) is proved.

\noindent $(iii)$ Define the martingale difference sequence $Z_{n,l} := \phi_{n,l}^2 - E_{\bar \bbP}[\phi_{n,l}^2 | \sF_{n,l-1}]$, where $\{\sF_{n,l}\}$ is a filtration defined with respect to $\phi_{n,l}^2$ as 
\(
\sF_{n,l} = \sigma \left( \phi_{n,1}^2, \ldots, \phi_{n,l-1}^2,
  \sum_{i=l}^n \phi_{n,i}^2 \right). 
\)

Now, consider the conditional expectation in the definition of $Z_{n,l}$. Notice that we have,
\begin{eqnarray*}
\sum_{i=j}^n \phi_{n,i}^2 &=& E_{\bar \bbP} \left[\sum_{i=j}^n
                              \phi_{n,i}^2 | \sF_{n,j-1} \right]\\
&=& E_{\bar \bbP} \left[\sum_{i=j}^n \phi_{n,j}^2 | \sF_{n,j-1} \right]\\
&=& (n-j+1) E \left[\phi^2_{n,j} | \sF_{n,j-1} \right].
\end{eqnarray*}
The penultimate equation follows from the fact that  $\phi_{n,l}^2$
are exchangeable, and the last by the fact that they are also
identically distributed. It follows that
\[
E_{\bar \bbP}\left[\phi^2_{n,j} | \sF_{n,j-1} \right] = \frac{1}{n-j+1} \sum_{i=j}^n \phi_{n,i}^2,
\]
and
\[
Z_{n,l} = \phi_{n,l}^2 - \frac{1}{n-l+1} \sum_{i=l}^n \phi_{n,i}^2.
 \]
Now, by definition $\phi_{n,l} \leq 2T/\sqrt{n}$ under the measure
$\bar \bbP$, so that

\begin{eqnarray*}
\sum_{l=1}^n \frac{1}{n-l+1} \sum_{i=l}^n \phi_{n,i}^2  &=& \sum_{l=0}^{n-1} \frac{1}{n-l} \sum_{j=l+1}^n \phi^2_{n,j}\\
&\leq& \frac{4T^2}{n} \sum_{l=0}^{n-1} \left( 1 - \frac{1}{n-l}
\right) \\
&\leq& 4T^2.
\end{eqnarray*}
Thus, we have
\(
\sum_{l=1}^n Z_{n,l} \geq \sum_{l=1}^n \phi_{n,l}^2 - 4T^2.
\)
Fix $\e > 0$, and use the Azuma-Hoeffding inequality to obtain
\begin{align*}
\bar \bbP\left( \sum_{l=1}^n \phi_{n,l}^2 - 1 \geq \e \right) &\leq
  \bar \bbP \left(\sum_{l=1}^n Z_{n,l} \geq \e +1 - 4T^2 \right)\\
& \leq \exp \left( - \frac{(\e +1 - 4T^2)^2}{n \times \frac{64 T^4}{n^2}} \right),
\end{align*}
where the bound in the numerator on the R.H.S. follows by the facts that
\[
|Z_{n,l}| \leq |\phi_{n,l}^2| + \frac{1}{n-l+1} \sum_{j=l}^n |\phi_{n,j}^2| \leq 2 |\phi_{n,l}^2|,
\]
and $\phi_{n_l}^2 \leq 2 T^2/n$. It follows that
\[
\bar \bbP\left(\sum_{l=1}^n \phi_{n,l}^2 - 1 \geq \e \right) \to 0 ~\text{as}~n\to\infty.
\]

Next, since $\phi_{n,l}^2 \geq 0$ for all $l \leq n$, it follows that $\sum_{l=1}^n Z_{n,l} \leq \sum_{l=1}^n \phi_{n,l}^2$. Clearly,
\[
\bar \bbP \left(\sum_{l=1}^n \phi_{n,l}^2 < 1 - \e \right) \leq \bar
\bbP \left(\sum_{l=1}^n Z_{n,l} < 1 - \e\right). 
\] 
Using the Azuma-Hoeffding inequality again, we have
\[
\bar \bbP \left(\sum_{l=1}^n Z_{n,l} < 1 - \e \right) \leq \exp\left(-\frac{(1-\e)^2}{n \times \frac{64 T^4}{n^2}} \right),
\]
implying that
\[
\bar \bbP \left(\sum_{l=1}^n \phi_{n,l}^2 < 1 - \e \right) \to 0 ~\text{as}~n\to\infty.
\]
Finally, it
follows that $\sum_{l=1}^n \phi_{n,l}^2 \To_{\bar\bbP} 1$ as
$n \to \infty$.

\noindent $(iv)$ Parts $(i),~(ii),~(iii)$ verify \cite[Theorem
24.2]{Bi68}, implying that $\hat A_n
\Rightarrow_{\bar \bbP} W^0$ in $(\sD,U)$ as $n \to infty$.
$\QED$
\end{proof}

\begin{proof}[Proof of Lemma~\ref{lem:counting-inverse}]
  \noindent~$(i)$ Fix $t \in [0,1]$. By definition it follows that
  $S_n (\bar A_n(t)) \leq t$ and
  $S_n (S_n^{-1}(t)) > t$ (and
  $S_n (S_n^{-1}(t)-) \leq t$). Thus, for any $\e > 0$,
  $S_n (\bar A_n(t) + \e) > t$. In particular,
  $\bar A_n(t) + \frac{1}{n} \geq S_n^{-1}(t)$. Since
  $S_n$ is non-decreasing (since the increments
  $\xi_{n,l} \geq 0$), it follows that
  \[
  \frac{1}{n} \geq S_n^{-1}(t) - \bar A_n(t) \geq 0,
  \]
  where the last inequality follows by definition. 

  \noindent~$(ii)$ The result is an obvious corollary of the argument for
  part $(i)$.
$\QED$
\end{proof}

\ignore{
\section{Moderate Deviations of Conditioned Point Processes}
Our analysis thus far has focused on what can be termed as the
\textit{normal} deviations of the conditioned point process. The
conditioning predicated in the definition of
$A_n$~\eqref{eq:condi-count} dictates a natural \textit{moderate}
deviation scaling for the count process. Recall that a sequence of
random variables $\{Y_n,~n\geq 1\}$ taking values in the Hausdorff
topological space is said to satisfy a moderate
deviations principle with rate function $I : \sX \to [0,\infty]$ if the following holds:

\begin{align}
  \label{eq:1}
  -\inf_{x \in \Gamma^o} I(x) \leq \liminf_{n\to\infty} a_n \log
  \bbP(Y_n \in \Gamma) \leq \limsup_{n \to\infty} a_n \log \bbP(Y_n
  \in \Gamma) \leq -\inf_{x\in\bar\Gamma} I(x).
\end{align}
where $\Gamma \subset \sX$ and $a_n$ is such that $a_n \to 0$ as $n
\to \infty$ and $n a_n \to +\infty$ as $n \infty$.

We start by assuming that the
conditioning is defined with respect to the Poisson
process $P$ to provide intuition for the scaling. Let $b_n$ be a
sequence such that $b_n := \sqrt{n a_n}$. Consider the following
probability:

\begin{align*}
  \bbP_n\left( \frac{b_n}{n} \sum_{i=0}^{\floor{nt}} \xi_i > c\right)
  \leq \frac{\bbP\left(\frac{b_n}{n} \sum_{i=0}^{\floor{nt}} \xi_i > c
  \right)}{\bbP(P(1) = n)}.
\end{align*}
Recall that $\bbP(P(1) = n) = e^{-\l} \l_n(n!)^{-1}$. Substituting
this into the expression, taking logarithms and multiplying through by
$a_n$ we obtain for any $\theta \in \bbD := \{\theta \in \bbR :
\varphi(\theta) < \infty\}$,
\begin{align*}
  a_n \log \bbP\left( \frac{b_n}{n} \sum_{i=0}^{\floor{nt}} \xi_i >
  c\right) \leq - a_n \theta c + a_n \floor{nt} \varphi\left( \frac{b_n
  \theta}{n} \right) + a_n \l - n a_n \log \l + a_n \log n!.
\end{align*}
To obtain a non-degenerate rate function the random variable $\sum_{i=1}^{\floor{nt}} \xi_i$
must be scaled such that $\log n! \in O(a_n^{-1})$. It is well known
that $\log n! \in \Omega(n \log n)$. However, choosing $a_n = 1/(n \log
n)$ is not sufficient, since $n a_n \to 0$ as $n \to \infty$ in this
case. Here, we choose $a_n = \frac{n^\a}{\log n!}$ for $\a \in
[0,1]$. Observe that $a_n \to 0$ and $n a_n \to 0$ as $n \to \infty$, satisfy

This scaling is
somewhat atypical from standard moderate deviations analyses of random
walks, where $a_n = 1/\log n$ is the natural scaling to use (see
Cram{\'e}r [1938]). This suggests that the moderate deviation scaling
for conditioned random walks are different from those observed for
``normal'' random walks. We formalize this statement in a more general
setting, in the next theorem below.
}

\section{An Application to Transient Workload Analysis}~\label{sec:workload}
We now demonstrate how the conditioned limit theorems developed in the
previous section can be used to conduct a transient performance
analysis of a single server queue. We will focus on the workload
process, though the analysis can be extended to other performance
metrics as well. 

The `data' of the queueing model are as follows: Let $T = 1$ (with out
loss of generality) and consider a triangular
array of tuples, $\left \{\left((\xi_{n,1}, \nu_{n,1}),\ldots,
  (\xi_{n,n+1}, \nu_{n,n+1}) \right),~n\geq
1 \right\}$. For simplicity we will assume that $\xi_{n,i}$ and $\nu_{n,i}$
are independent for all $i = 1,\ldots,n$. We will also assume that
$\xi_{n,i}$ are identically distributed, and that
the unconditional mean satisfies $\bbE[\xi_{n,i}] < \infty $, for all $n
\geq 1$ and $i \leq n$. Similarly, for $\nu_{n,i}$ we assume
i.i.d. random variables with $\bbE[\nu_{n,i}] = 1$ and variance
$\sigma^2$ for all $n \geq 1$.

As in Section~\ref{sec:conditioned-renewal}, we will focus on the
sub-array, \[\left\{\Xi_n,~n\geq 1\} =\{\left((\xi_{n,1},\nu_{n,1}),\ldots,(\xi_{n,n},\nu_{n,n})\right),~n\geq
1 \right\},\] and define the corresponding conditioned measures
$\{\bbP_n,~n\geq 1\}$ and the joint distribution $\bar \bbP$ as in Section~\ref{sec:prod-space}. Observe that the independence of $\{\xi_{n,l},~l\leq n\}$ and
  $\{\nu_{n,l},~l\leq n\}$ implies that $\bbP_n \left( (\xi_{n,1}, \ldots, \xi_{n,n}) \in d \mathbf x,
    (\nu_{n,1},\ldots,\nu_{n,n}) \in d \mathbf z) \right)$
  \begin{align*}
  &= \bbP (\left( (\xi_{n,1}, \ldots, \xi_{n,n}) \in d \mathbf x,
    (\nu_{n,1},\ldots,\nu_{n,n}) \in d \mathbf z) \right) | \sA_n)\\
    &=\bbP_n \left( (\xi_{n,1}, \ldots, \xi_{n,n}) \in d \mathbf x
      \right) \bbP \left((\nu_{n,1},\ldots,\nu_{n,n}) \in d
      \mathbf z) \right)
  \end{align*}
where $\mathbf x,~\mathbf z \in \bbR^n$. Recalling the definition of
$\bar \bbP$, we note that 
\(
\bbP \left((\nu_{n,1},\ldots,\nu_{n,n}) \in d
      \mathbf z) \right) = \bar \bbP\left((\nu_{n,1},\ldots,\nu_{n,n}) \in d
      \mathbf z) \right),
\)
and consequently 
\[
\begin{split}
  \bar \bbP \bigg( (\xi_{n,1}, \ldots, \xi_{n,n}) \in d \mathbf x,&
    (\nu_{n,1},\ldots,\nu_{n,n}) \in d \mathbf z) \bigg)\\ &= \bar \bbP
  \left( (\xi_{n,1}, \ldots, \xi_{n,n}) \in d \mathbf x \right) \bar
  \bbP\left( (\nu_{n,1},\ldots,\nu_{n,n}) \in d \mathbf z\right).
\end{split}
\]

Now, for the $n$th row $\Xi_n$, we define the process
\begin{align}
  \label{eq:4}
  \G_n(t) := \frac{1}{n} \sum_{i=1}^{\floor{nt}} \nu_{n,i} -
  \sum_{i=1}^{\floor{nt}} \xi_{n,i} ~\forall t \in [0,1].
\end{align}
Then, the workload process is defined as 
\begin{align}
  \label{eq:5}
  \Phi(\G_n) := \G_n + \Psi(\G_n),
\end{align}
where $\Psi(\G_n)(\cdot) : = \sup_{0 \leq s \leq \cdot} (-\G_n(s))_+$ is the
Skorokhod regulator function.

\begin{proposition}\label{prop:workload}
  Conditional on the sequence of events $\sA_n := \{\sum_{i=1}^n \xi_{n,i} \leq 1 <
  \sum_{i=1}^{n+1} \xi_{n,i}\}$ $n \geq 1$, we have
  
  \noindent $(i)$ $\G_n \to 0~$ in $(\sD,U)~\bar \bbP-$a.s. as $n \to \infty$ and $\Phi(\G_n) \to 0$
  in $(\sD,U)~\bar \bbP-$a.s. as $n \to \infty$.

  \noindent $(ii)$ $\sn \G_n \To_{\bar\bbP} W - W^0$ in $(\sD,U)$ as $n \to \infty$ and $\sn
  \Phi(\G_n) \To_{\bar\bbP} \Phi(W - W^0)$ in $(\sD,U)$ as $n \to \infty$, where $W$ is a Brownian
  motion with zero drift and diffusion coefficient equal to $\sigma$,
  and $W^0$ is the Brownian bridge process defined in Proposition~\ref{thm:conditioned-fclt}.
\end{proposition}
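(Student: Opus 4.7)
The strategy is to decompose $\G_n$ into the service-time partial sum and the inter-arrival partial sum, invoke the conditioned limits of Section~\ref{sec:conditioned-renewal} for the latter and classical Donsker-type results for the former, combine them using the row-wise independence of $\nu_{n,\cdot}$ and $\xi_{n,\cdot}$ on the product space $(\bar\Omega,\bar\sF,\bar\bbP)$, and finally push the limits through the Skorokhod reflection map $\Phi$ via its positive homogeneity and Lipschitz continuity in the uniform metric.

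For part $(i)$, observe that the two summands in $\G_n$ share the same deterministic fluid limit. Because $\{\nu_{n,i}\}$ is independent of $\sA_n$ under $\bar\bbP$, it retains its unconditional i.i.d.\ law with mean one, and the classical functional SLLN yields $n^{-1}\sum_{i=1}^{\lfloor n\cdot\rfloor}\nu_{n,i} \to e$ uniformly, $\bar\bbP$-a.s. Proposition~\ref{thm:conditioned-fslln} (with $T=1$) similarly yields $\sum_{i=1}^{\lfloor n\cdot\rfloor}\xi_{n,i} \to e$ uniformly, $\bar\bbP$-a.s. Hence $\G_n \to 0$ in $(\sD,U)$, $\bar\bbP$-a.s., and the Lipschitz continuity of $\Phi$ on $(\sD,U)$ delivers $\Phi(\G_n) \to \Phi(0) = 0$, $\bar\bbP$-a.s.

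For part $(ii)$, center both partial sums at their common fluid drift $\lfloor nt\rfloor/n$ and write
\[
\sqrt{n}\,\G_n(t) = \frac{1}{\sqrt{n}}\sum_{i=1}^{\lfloor nt\rfloor}(\nu_{n,i}-1) - \sqrt{n}\Bigl(\sum_{i=1}^{\lfloor nt\rfloor}\xi_{n,i} - \tfrac{\lfloor nt\rfloor}{n}\Bigr).
\]
Donsker's theorem applied row-wise to the i.i.d.\ sequence $\{\nu_{n,i}-1\}$ of variance $\sigma^2$ yields weak convergence of the first term in $(\sD,U)$ to $\sigma W$, a Brownian motion with diffusion coefficient $\sigma$. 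For the second, Theorem~\ref{thm:counting}$(ii)$ combined with Lemma~\ref{lem:counting-inverse}$(ii)$ gives $\sqrt{n}(S_n^{-1}-e) \To_{\bar\bbP} -W^0$; since the fluid limit $e$ is continuous and strictly increasing, the inverse-map FCLT (\cite[Thm.~7.8.2]{Wh01b}) then transfers this to $\sqrt{n}(S_n - e) \To_{\bar\bbP} W^0$, with the negligible discretization drift $\sqrt{n}(\lfloor nt\rfloor/n - t) = O(1/\sqrt{n})$ absorbed uniformly, so the second term converges weakly to $-W^0$. Independence of $W$ and $W^0$ follows from the product factorization $\bar\bbP((\xi_{n,\cdot},\nu_{n,\cdot})\in\cdot)=\bar\bbP(\xi_{n,\cdot}\in\cdot)\bar\bbP(\nu_{n,\cdot}\in\cdot)$ noted in the paragraph preceding the proposition; hence jointly $\sqrt{n}\,\G_n \To_{\bar\bbP} W - W^0$ in $(\sD,U)$ by the continuous mapping theorem applied to addition. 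Positive homogeneity gives $\sqrt{n}\,\Phi(\G_n) = \Phi(\sqrt{n}\,\G_n)$, and one further application of the continuous mapping theorem to the Lipschitz map $\Phi$ yields $\sqrt{n}\,\Phi(\G_n) \To_{\bar\bbP} \Phi(W - W^0)$.

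The chief technical obstacle is establishing that $W$ and $W^0$ are genuinely independent in the limit: both are realized on $(\bar\Omega,\bar\sF,\bar\bbP)$, where the conditioning that produces $W^0$ could a priori entangle the service times. The resolution is the row-wise independence of $\nu_{n,\cdot}$ from $\sA_n$ (which depends only on $\xi_{n,\cdot}$), preserved under the product construction of Section~\ref{sec:prod-space} and passing intact to the joint weak limit.
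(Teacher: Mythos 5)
Your proposal is correct and takes essentially the same route as the paper: decompose $\G_n$ into the service-time and conditioned inter-arrival partial sums, apply the classical FSLLN/FCLT to the former and Propositions~\ref{thm:conditioned-fslln} and~\ref{thm:conditioned-fclt} to the latter, use the product-measure factorization of $\bar\bbP$ for joint convergence, and finish with the continuity (and positive homogeneity) of the Skorokhod map. The only deviation is that you recover the FCLT for $\sum_{i=1}^{\floor{n\cdot}}\xi_{n,i}$ by a double inversion through Theorem~\ref{thm:counting}(ii) and Lemma~\ref{lem:counting-inverse}(ii); this is a redundant detour, since Proposition~\ref{thm:conditioned-fclt}(iv) already gives $\hat S_n \To_{\bar\bbP} W^0$ directly (and Theorem~\ref{thm:counting} is itself obtained from it by inversion), but it does not affect correctness.
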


\begin{proof}
  First, observe that if $\G_n \to 0$ in $(\sD,U)~\bar\bbP-$a.s. and $\sn \G_n
  \To_{\bar\bbP} W - W^0$ in $(\sD,U)$ as $n \to \infty$,
  then the convergence for the workload process follows automatically
  from the continuity of the Skorokhod regulator map, $\Phi(\cdot)$. 

  From the FSLLN \cite[Chapter 5]{ChYa01} we have,
  \begin{align*}
    \frac{1}{n} \sum_{i=0}^{\floor{n \cdot }} \nu_{n,i} \to
    e~\text{in }(\sD,U)~\bar\bbP-\text{a.s. as}~n \to \infty,
  \end{align*}
  and from the FCLT \cite[Chapter 5]{ChYa01},
  \begin{align*}
    \sn \left( \frac{1}{n} \sum_{i=0}^{\floor{n \cdot}} \nu_{n,i} - e
    \right) \Rightarrow_{\bar \bbP} W ~\text{as}~n \to \infty.
  \end{align*}
  On the other hand, Proposition~\ref{thm:conditioned-fslln} and
  Proposition~\ref{thm:conditioned-fclt} imply that
  \[
  \frac{1}{n}\sum_{i=1}^{\floor{n \cdot}} \xi_{n,l} \to
  e~\text{in}~(\sD,U)~\bar\bbP-\text{a.s. as } n \to \infty   
  \]
  and
  \[
  \sn \left( \frac{1}{n} \sum_{l=1}^{\floor{n \cdot}} \xi_{n,l} - e
  \right) \Rightarrow_{\bar \bbP} W^0~\text{in}~(\sD,U)~\text{as}~n\to\infty.
  \]
  Note that, for the latter result we also use the fact that $\mu_n
  \leq 1$ implies that
  \begin{align*}
    \sn \left( \floor{n \cdot} \m_n - e \right) &\leq \frac{1}{\sn}
                                                  \left( \floor{n
                                                  \cdot} - n e \right)
                                                  \to 0~\text{in}~(\sD,U)~\text{as}~n\to\infty.
  \end{align*}
  The continuity of the
  difference operator in the metric space $(\sD,U)$ implies that
  \[
  \G_n \to 0~\text{in}~(\sD,U)~\bar\bbP-\text{a.s. as}~n\to\infty,
  \]
  and
  \[
  \sn \G_n = \sn \left( \frac{1}{n}\sum_{l=0}^{\floor{nt}} \left(
      \nu_{n,l} - \xi_{n,l} \right) \right) \To_{\bar \bbP} \left(W
    - W^0 \right) ~\text{in}~(\sD,U)~\text{as}~n\to\infty.
  \]
$\QED$
\end{proof}

\subsection{Comparison with conventional heavy-traffic approximation}~\label{sec:compare}
We begin by observing that the Brownian bridge limit does not assume a
so-called ``heavy-traffic condition'' as is the case in standard
heavy-traffic approximations. The standard heavy-traffic condition fora sequence of queueing models indexed by $n$ having arrival
$\lambda_n$ and service rate $\mu_n$ assumes that 
\[
\sn \left( \frac{\l_n}{n} - \frac{\m_n}{n} \right) \to \theta
~\in~\bbR ~\text{as}~n\to\infty.
\]
If $\theta < 0$, then the load factor $\rho_n := \lambda_n/\m_n < 1$,
implying that the sequence of models are `underloaded'; that is, in the
long-term the workload process remains bounded. On the other
hand, if $\theta > 0$, then $\rho_n > 1$ and the sequence of models
are `overloaded'. In either case, however, $\lim_{n \to \infty} \rho_n =
1$. In other words, for large enough $n$ there are many arrivals, but
approximately a similar order of service completions as well. Note
that we are not assuming the limit diffusion approximation has a
steady state, hence considering $\theta \geq 0$ is acceptable in the
current analysis.

The workload approximation for a $GI/GI/1$ queue can be developed by
assuming $\l_n = n$ and $\m_n = n - \theta \sn$. Let the sequence of i.i.d. random variables $\{\nu_i,~i\geq 1\}$
and $\{\xi_i,~i\geq 1\}$ represent the service times and
inter-arrival times (respectively). Assume that $\mathbb E[\xi_1] = 1$
and $\mathbb E[\nu_1] = n/\mu_n = n/(n - \theta \sn)$ in the $n$th system. We define the
workload process $\Phi(\G_n)$ following~\eqref{eq:5}, with
\[
\G_n(t) = \left(\frac{1}{n} \sum_{i=0}^{\floor{nt}} \nu_{i} -
  \frac{nt}{\m_n} \right)
- \left(\frac{1}{n} \sum^{\floor{nt}}_{i=0} \xi_i - \frac{nt}{\l_n}\right) +
nt\left(\frac{1}{\m_n} - \frac{1}{\l_n}\right),
\]
where we assume that $\nu_0 = \xi_0 = 0$ a.s. Note that we do not 
define a triangular array anymore, since $\{\G_n\}$ can be
considered as a single sequence of stochastic processes. However,
there are versions of the heavy-traffic approximation where triangular
arrays can be used \cite{Br1998,Wi1998}.

Observe that 
$n^{3/2}\left(\frac{1}{\m_n} - \frac{1}{\l_n}\right) \to \theta$
as $n \to \infty$. This together with the FCLT \cite[Chapter 5]{ChYa01} implies that
\(
\sqrt n \G_n \Rightarrow \theta e + W - W'~\text{as}~n\to\infty,
\)
where $W$ and $W'$ are Brownian motion processes corresponding to
FCLTs for the service times and inter-arrival time sequences
(respectively) and $e : \mathbb R \to \mathbb R$ is the identity map,
as before. It follows that
\[
\sqrt n \Phi(\G_n) \Rightarrow \Phi(\theta e + W - W') ~\text{as}~ n\to\infty.
\] 
Thus, the diffusion
approximation in this case is a reflected Brownian motion. In
contrast, the limit process in
Proposition~\ref{prop:workload}~(ii) is a reflected Brownian bridge process explicitly capturing a
`depleting points effect,'  in the sense that
as the day progresses, there are fewer and fewer remaining jobs to
arrive; see \cite{BeHoJvL2015} for a rigorous definition. This effect is a
consequence of conditioning on the number of arrivals in the horizon.

From an operational analysis perspective, the `depletion of points' effect also implies that the increments of the workload
process display long-range correlations (if $\Theta(n)$ arrivals occur
in $[0,t)$, then there is necessarily few arrivals in the remaining
time). These effects are not present
in the standard $GI/GI/1$ heavy-traffic analysis. Furthermore, these effects can
affect operational decisions, and the choice of model is crucial and
application dependent. On the other hand,
from a simulation perspective, the main results in this paper suggest
approximations that can be easily simulated in the conduct of
`what if' type simulation analyses of $GI/GI/1$ queueing models. For
instance, it is of interest to ask what the distribution of the
workload is going to be at time $T (=1)$ conditioned on
$\mathcal A_n$. Simulating the workload process in
Proposition~\ref{prop:workload} to compute this is quite straightforward.

\bibliography{refs-queueing}
\bibliographystyle{apt}
\end{document}